\newtheorem{theorem}{Theorem}
\newtheorem{lemma}{Lemma}
\newtheorem{corollary}{Corollary}
\theoremstyle{definition}
\newtheorem{remark}{Remark}[section]
\newcommand{\pr}[1]{ \left( #1 \right) }
\newcommand{\derivative}[1]{#1^{\prime}}
\newcommand{\abs}[1]{\left| #1 \right|} 
\newcommand{\set}[1]{\left\{#1\right\}}
\title{Numerical Analysis of HiPPO-LegS ODE for\\
Deep State Space Models
}
\author{
\normalfont
\begin{tabular}{@{}c@{\hspace{1em}}c@{\hspace{0.7em}}c@{\hspace{0.2em}}c@{}}  
    \textbf{Jaesung R. Park} & \textbf{Jaewook J. Suh} & \textbf{Youngjoon Hong} &\textbf{Ernest K. Ryu} \\
    Mathematical Sciences &
    CMOR & Mathematical Sciences & Mathematics \\
    Seoul National University & Rice University& Seoul National University & UCLA \\
    \texttt{ryanpark7@snu.ac.kr} & \texttt{jacksuh@rice.edu} &\texttt{hongyj@snu.ac.kr}& \texttt{eryu@math.ucla.edu}
\end{tabular}
}
\date{}
\begin{document}
\maketitle

    
\begin{abstract}
In deep learning, the recently introduced state space models utilize HiPPO (High-order Polynomial Projection Operators) memory units to approximate continuous-time trajectories of input functions using ordinary differential equations (ODEs), and these techniques have shown empirical success in capturing long-range dependencies in long input sequences. However, the mathematical foundations of these ODEs, particularly the singular HiPPO-LegS (Legendre Scaled) ODE, and their corresponding numerical discretizations remain unsettled. In this work, we fill this gap by establishing that HiPPO-LegS ODE is well-posed despite its singularity, albeit without the freedom of arbitrary initial conditions. Further, we establish convergence of the associated numerical discretization schemes for Riemann integrable input functions.
\end{abstract}


\section{Introduction} \label{sec:intro}




State-space representation is a cornerstone of dynamical-system theory and has been instrumental in the analysis and control of physical processes in control engineering, signal processing, and computational neuroscience. In the deep-learning literature, this classical framework has recently re-emerged as a promising paradigm for sequence modelling, offering a principled alternative to recurrent and attention-based architectures \citep{gu_efficiently_2022, dao2024transformers, zhu2024vision, nguyen2022s4nd, goel_its_2022}.
Modern state-space models for long sequences build on a synthesis of two pillars: (i) linear state-space theory in its canonical form \citep{williams2007linear, zak2003systems} and (ii) the HiPPO (High-order Polynomial Projection) framework \citep{gu2020hippo}, which prescribes optimal polynomial projections for compressing the history of an input signal. This amalgamation provides both an interpretable memory mechanism and a mathematically tractable route for capturing long-range dependencies within deep architectures.

HiPPO is a framework using an $N$-dimensional ordinary differential equation (ODE) to approximate the continuous-time history of an input function $f$.
In particular, the HiPPO-LegS (Legendre Scaled) ODE is
\[
\derivative{c}(t) = -\frac{1}{t}Ac(t) + \frac{1}{t}Bf(t),
\] 
for $t\in [0,T]$, where $T>0$ is some terminal time and $f \colon  [0,T] \to \mathbb{R}$ is an input function.
With a specific choice of $A \in \mathbb R^{N \times N}$ and $B \in \mathbb R^{N \times 1}$, the solution $c\colon [0,T]\rightarrow \in \mathbb{R}^N$ encodes the continuous-time history of $f$ via
\begin{align*} 
    c_j(t) &= \frac{1}{t} \Big\langle f(\cdot ), \sqrt{2j-1} P_{j-1}\big(\tfrac{2\,\cdot} {t}-1\big) \Big\rangle_{L^2([0,t])}
    =
    \frac{\sqrt{2j-1}}{t} \int_0^t f(s) P_{j-1}\big(\tfrac{2s} {t}-1\big) \;ds,
\end{align*}
where $P_{j-1}$ is the $j-1$-th Legendre polynomial, for $j=1,2,\dots$.
The input function $f$ can be approximately reconstructed by the formula
\begin{align*}
    f(s) \approx \sum_{j=1}^Nc_j(t) \sqrt{2j-1} P_{j-1}\big(\tfrac{2s}{t}-1\big)\qquad\text{ for }s\in [0,t].
\end{align*}
Since $\{ \sqrt{2n+1} P_n(\frac{2\,\cdot}{t}-1)\}_{n=0}^\infty$ forms an
orthonormal polynomial basis on the $\frac{1}{t} \mathbb{I}_{[0,t]}(s)ds$ measure, this approximation can be viewed as the projection of $f$ on to the $N$-dimensional subspace spanned by $\{\sqrt{2n+1} P_n(\frac{2\,\cdot}{t}-1)\}_{n=0}^{N-1}$.



Unlike previous linear time-invariant (LTI) methods such as Legendre Memory Units (LMUs) \citep{voelker2019legendre}, which considers measures with a fixed-length support, the LegS formulation considers the uniform measure on $[0,t]$ that widens with the progression of time $t$, and therefore provides a memory unit keeping track of the entire trajectory of $f(\cdot)$ from time $0$ to $t$.
This distinctive property makes the LegS formulation powerful in many practical applications.

However, despite receiving much attention for its use in state space models in deep learning, the careful mathematical foundation of the LegS ODE is missing. To begin with, the singularity at $t=0$ renders the question of existence and uniqueness of the solution $c(t)$ a non-obvious matter. Moreover, it is unclear if the numerical methods used in the work of \cite{gu2020hippo} are mathematically justified in the sense of convergence: In the limit of small stepsizes, do the discrete simulations converge to the true continuous-time solution? What regularity conditions must $f$ satisfy for such convergence?

\paragraph{Contributions.} 
In this work, we provide the rigorous mathematical foundations of the HiPPO-LegS ODE formulation and its discretization. Specifically, we show that (i) the solution to the LegS ODE exists and is unique, but the initial condition is fixed to a predetermined value depending on $f(0)$ and (ii) the commonly used discretization schemes for LegS converges to the exact continuous-time solution for all Riemann integrable $f$.




\subsection{Related works} \label{sec:related_works}

\paragraph{State space models for deep learning.}
The use of state space models (SSMs) in deep learning has gained significant recent attention due to their ability to process sequential data efficiently.
While the transformers architecture \citep{vaswani2017attention} has become the standard for language models, recent SSM models such as mamba  \citep{gu_mamba_2023} have been reported to achieve state-of-the-art results, especially in handling long sequences.

Large-scale SSMs deploy an initialization scheme motivated by the HiPPO theory \citep{gu2023train}. One distinctive characteristic of state-of-the-art SSMs is that the computation cost displays a near-linear growth with respect to sequence length, unlike the quadratic growth of transformers. S4 \citep{gu_efficiently_2022} uses the fast Fourier transform to attain the near-linear cost, whereas Mamba leverages hardware-aware computation techniques to attain near-linear parallel compute steps. The SSM architecture has been applied to or motivated numerous model structures \citep{fu2022hungry, hasani2022liquid, sun2023retentive, peng2023rwkv} and are used across various modalities \citep{zhu2024vision, li2025videomamba, shams2024ssamba}.

\paragraph{Legendre memory units for LSTMs.}
A fundamental challenge in training recurrent neural networks (RNNs) is the vanishing gradient problem, which causes long-range dependencies in temporal data to be lost during training  \citep{bengio1994learning, le2015simple}. While LSTMs \citep{schmidhuber1997long} alleviate this problem by incorporating nonlinear gating mechanisms, modeling very long sequences remains challenging.
Motivated by applications in computational neuroscience, LMUs \citep{voelker2019dynamical, voelker2019legendre} introduced a novel approach to extend LSTM's capability to `remember' the sequence information by using a $N$-dimensional ODE. Using Pad\'{e} approximants \citep{pade1892representation}, the ODE is constructed so that the solution is the projection of the input function on the orthonormal basis of measure $\mathbb{I}_{[t-\theta, t]}$, where $\theta$ is a hyperparameter. 
The HiPPO framework could be understood as a generalization of LMUs. 
While LMU and its variants has proven to be effective for long sequence modeling \citep{liu_lmuformer_2024,zhang2023performance,  chilkuri_parallelizing_2021}, their scope is limited to LTI methods.

\paragraph{Convergence analysis of SSMs from control theory.}
State space models have been extensively studied in control theory \citep{kalman1960new, zabczyk2020mathematical}, with significant research dedicated to discretization schemes and their analysis \citep{kowalczuk1991discretization}. However, these results are not directly applicable to the LegS ODE due to their exclusive focus on LTI systems \citep{karampetakis2014error} or their assumption of discrete-time inputs \citep{meena2020discretization}. Furthermore, the objectives of state-space models in deep learning applications differ fundamentally from those in classical control theory, where for the latter, controlling or statistically estimating the state is usually the focus. This difference makes it challenging to directly adapt these results to modern deep-learning contexts.




\section{Problem setting and preliminaries} \label{sec:problem setting}

In this work, we consider the LegS ODE
\begin{align}   \label{eq:legs} 
    \derivative{c}(t) &= -\frac{1}{t}Ac(t) + \frac{1}{t}Bf(t) 
\end{align}
for $t\in (0,T]$, where $T>0$ is some terminal time and $c\colon [0,T]\rightarrow  \mathbb{R}^N$ is the state vector encoding the continuous-time history of the input function $f \colon  [0,T] \to \mathbb{R}$. The matrix $A \in \mathbb R^{N \times N}$ and vector $B \in \mathbb R^{N \times 1}$ are given by
\[
    \begin{aligned}
    A_{ij} &= \begin{cases}
        (2i-1)^{1/2}(2j-1)^{1/2} & \text{if} \quad i > j \\
        i & \text{if} \quad i=j \\
        0 & \text{if} \quad i < j ,
    \end{cases} \qquad\qquad
    B_j &= (2j-1)^{1/2}.
\end{aligned}
\]
Since $A$ is lower-triangular, we immediately recognize that $A$ is diagonalizable with simple eigenvalues $\{1, 2, \dots, N\}$. We denote the eigendecomposition as
\[
A=V DV^{-1},\qquad 
    D = \mathrm{diag}\,(1,2,\dots,N).
\]
with invertible $V\in\mathbb{R}^{N\times N}$.
In the indexing of $A_{ij}$, $B_j$, and $c_i(t)$, we have $i,j\in\{1,\dots,N\}$, i.e., we use $1$-based indexing. (The prior HiPPO paper \citep{gu2020hippo} uses $0$-based indexing.)





\paragraph{Shifted Legendre polynomials.}
We write $P_j(x) \colon [-1,1] \to [-1,1]$ to denote the $j$-th Legendre polynomial, normalized such that $P_j(1)=1$, for $j=0,1,\dots$.
However, we wish to operate on the domain $[0,1]$, so we perform the change of variables $x \mapsto 2x-1$.
This yields, 
\[
\tilde{P}_j(x)=  P_j(2x-1) = \sum_{k=0}^j (-1)^j \binom{j}{k} \binom{j+k}{k} (-x)^k,
\]
the $j$-th \emph{shifted} Legendre polynomial, for $j=0,1,\dots$. The shifted Legendre polynomials satisfy the recurrence relation
\begin{align} \label{eq:shifted_legen_rel}
    x\tilde{P}_j'(x) = j\tilde{P}_j(x) + \sum_{k=0}^{j-1}(2k+1)\tilde{P}_k(x),
\end{align}
which can be derived by combining the following well-known identities \citep{arfken2011mathematical}
\begin{align*}
    (2n + 1)P_j(x) &= P_{j+1}'(x) - P_{j-1}'(x) \\
    P_{j+1}'(x) &= (n+1)P_j(x) + xP_j'(x).
\end{align*}

\paragraph{Numerical discretization methods.} 
In this work, we analyze the numerical methods of the LegS ODE used in the prior work \citep{gu2020hippo}. For all the discretization methodss, we consider a mesh grid with $n$ mesh points, with initial time $t_0=0$ and stepsize $h = T/n$. Starting from $c^0=c(0)$, we denote $k$-th step of the numerical method as $c^k$, and $f(kh)$ as $f^k$.

The \textbf{\underline{backward Euler}} method
\begin{align*}
    c^{k+1} &= \Big(I + \frac{1}{k+1}A\Big)^{-1}c^k + \Big(I + \frac{1}{k+1}A\Big)^{-1} \frac{1}{k+1}  B f^{k+1},
    \qquad\text{ for }k=0,1,2,\dots, n-1
\end{align*}
is well defined as is.
However, the \textbf{\underline{forward Euler}} method
\begin{align*}
    c^{k+1} &= \Big(I - \frac{1}{k}A\Big) c^k + \frac{1}{k}Bf^k,
    \qquad\text{ for }k=1,2,\dots, n-1
\end{align*}
and the \textbf{\underline{bilinear (trapezoidal)}} method
\begin{align*}
    c^{k+1} &= \Big(I + \frac{1}{2(k+1)}A\Big)^{-1} \Big(I - \frac{1}{2k}A\Big)c^k + \Big(I + \frac{1}{2(k+1)}A\Big)^{-1} \Big(\frac{1}{2k}f^k + \frac{1}{2(k+1)}f^{k+1}\Big)B,
    \qquad\text{ for }k=1,2,\dots, n-1
\end{align*}
are not well defined for $k=0$. One remedy would be to use the identity $c'(0) = \pr{A+I}^{-1} f'(0)$, which we derive in Lemma~\ref{lem:t=0 calc}. However, if $f$ is not differentiable at $t=0$, then even this remedy is not possible.
Hence, in Section~\ref{sec:error_analysis}, where we consider general $f$, we ``zero-out'' the ill-defined terms by setting them to be $0$.
So, for the \textbf{\underline{step $0$ of forward Euler}}, we set
\[
c^1 = c^0
    \qquad\text{ for }n=0,
\]
and for the \textbf{\underline{step $0$ of bilinear (trapezoidal)}}, we set
\begin{align*}
    c^{1} &= \pr{I + \frac{1}{2}A}^{-1} c^0 + \frac{1}{2} \pr{I + \frac{1}{2}A}^{-1}Bf^1,
    \qquad\text{ for }k=0.
\end{align*}

In the prior work \cite{gu2020hippo}, the authors sidestep the division by $0$ in the $1/k$ terms by shifting the $k$ index up by $1$, leading to the 
\textbf{\underline{approximate  bilinear}} method
\begin{align*}
    c^{k+1} &= \pr{I + \frac{1}{k+1}A/2}^{-1} \pr{I - \frac{1}{k+1}A/2}c^k + \pr{I + \frac{1}{k+1}A/2}^{-1} \pr{\frac{1}{k+1}f^{k+1}}B,\qquad\text{ for }k=0,1,2,\dots,n-1.
\end{align*}
In this work, we establish convergence of both the bilinear method (with zero-out) and the approximate bilinear method.

Lastly, prior work has also used the \textbf{\underline{Zero-order hold}} method
\begin{align*}
    c^{k+1} = e^{A\log\pr{\frac{k}{k+1}}}c^k + A^{-1} \pr{I - e^{A \log \pr{\frac{k}{k+1}}}}B f^k, \qquad\text{ for }k=0,1,2,\dots,n-1,
\end{align*}
where we set $e^{A\log\pr{\frac{n}{n+1}}}=0$ at $n=0$, consistent with the limit $e^{A\log\pr{\frac{n}{n+1}}}\rightarrow 0$ as $n\rightarrow 0^+$.
We also establish convergence for the zero-order hold method.

\paragraph{Convergence of numerical discretization methods.} 
The numerical methods we consider are one-step methods of the form
\[
c^{k+1} = c^k + h\Phi(t_k, t_{k+1}, c^k, c^{k+1}; h), \qquad k=0,1,...,n-1
\]
with stepsize $h  =  T/n$ and $t_k=t_0+kh$ for $k=0,\dots,n-1$, approximating the solution to the initial value problem
\begin{align*} 
    c'(t) = g(t,c(t)). 
\end{align*}
Here, $\Phi$ is a numerical integrator making the approximation
\[
\Phi(t_k, t_{k+1}, c^k, c^{k+1}; h)\approx c(t_{k+1})-c(t_k)= \int^{t_k+1}_{t_k}g(s,c(s))\;ds.
\]

To analyze such methods, one often estimates the \emph{local truncation error} (LTE) $T_k$ at timestep $t_k$ as
\begin{align*}
T_k 
&= \frac{c(t_{k+1}) - c(t_k)}{h} - \Phi(t_k, t_{k+1}, c(t_k), c(t_{k+1}); h)
\end{align*}
and then estimates its accumulation to bound the \emph{global error}
\[
e_n = c(t_n) - c^n
\]
which is calculated at the endpoint.
We say that a numerical discretization method is \emph{convergent} if the global error converges to $0$, i.e., if
\[
\|c(t_n) - c^n\|\rightarrow 0,\qquad\text{ as }\,n\rightarrow\infty.
\]
Further, we quantify the \emph{convergence rate} with the \emph{order} of the method: we say the method has order $p$ if 
\begin{align*}
    \| c^n - c(t_n) \| \leq \mathcal{O}\pr{1/n^p},\qquad\text{ as }\,n\rightarrow\infty.
\end{align*}
Classical ODE theory states that if the right-hand-side $g$ in the initial value problem is continuous with respect to $c$ and $t$, and Lipschitz continuous with respect to $c$, the solution exists and is unique in an interval including the initial point $t_0=0$. Moreover, under the same conditions, the global error can be bounded with the local truncation error \citep{ascher_petzold_1998,Süli_Mayers_2003}. However, this standard theory does not apply to the LegS ODE due to the singularity at $t=0$, and the non-smoothness of the input function $f$.




\paragraph{Absolute continuity on a half-open interval.} 
For $T\in (0,\infty)$, we say a function $c\colon(0,T]\rightarrow\mathbb{R}^N$ is absolutely continuous if its restriction to the closed interval $[\varepsilon,T]$ for any $\varepsilon\in (0,T)$ is absolutely continuous. (Recall that the standard definition of absolute continuity assumes a closed interval for the domain.) Even if $\lim_{t\rightarrow 0^+}c(t)$ is well defined and finite, the continuous extension of $c$ to $[0,T]$ may not be absolutely continuous on $[0,T]$. In other words, absolute continuity on $[\varepsilon,T]$ for all $\varepsilon\in (0,T)$ does not imply absolute continuity on $[0,T]$. We discuss this technicality further in Section~\ref{sec:well-posed}, in the discussion following Theorem~\ref{thm:exist and unique}.

\paragraph{Lebesgue point.}
Let $f\colon [0,T]\rightarrow\mathbb{R}$ be Lebesgue measurable and integrable. 
We say $f$ has a Lebesgue point at $t=0$ if
\[
\lim_{\varepsilon\rightarrow 0^+}\frac{1}{\varepsilon}\int^\varepsilon_0|f(s)-f(0)|\;ds=0.
\]
If $f(t)$ is continuous at $t=0$, then $f$ has a Lebesgue point at $t=0$.

\section{LegS is well-posed} \label{sec:well-posed}
In this section, we show that the LegS ODE is well-posed despite the singularity. Crucially, however, we show that there is no freedom in choosing the initial condition. 





\begin{theorem}[Existence and uniqueness] \label{thm:exist and unique}
    For $T>0$ and $c_0 \in \mathbb{R}^N$, we say $c\colon [0,T]\rightarrow \mathbb R^N$ is a solution (in the extended sense) of the LegS ODE if $c$ is
    continuous on $[0,T]$,
    absolutely continuous on $(0,T]$, $c$ satisfies \eqref{eq:legs} for almost all $t \in (0,T]$, and $c(0) = c_0$.
    Assume $f\colon [0,T]\rightarrow \mathbb R$ is Lebesgue measurable, integrable, and 
    has a Lebesgue point at $t=0$. 
    Then, the solution exists and is unique if $c_0 = f(0)e_1$, where $e_1\in \mathbb{R}^N$ is the first standard basis vector.
    Otherwise, if $c_0 \ne  f(0)e_1$, a solution does not exist.
\end{theorem}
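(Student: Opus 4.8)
The plan is to exploit the triangular structure of $A$ to reduce the coupled system to $N$ decoupled scalar ODEs, solve each one explicitly, and then read off both uniqueness and the forced initial value from the closed-form solutions. First I would diagonalize: writing $d(t)=V^{-1}c(t)$ and $\tilde B = V^{-1}B$, the system \eqref{eq:legs} becomes the decoupled family $\derivative{d_i}(t) = -\tfrac{i}{t}d_i(t) + \tfrac{\tilde B_i}{t}f(t)$ for $i=1,\dots,N$, each a scalar linear ODE with the singular coefficient $-i/t$. Because $V$ is invertible, $c$ is an extended solution if and only if each $d_i$ is, and $c$ is continuous/bounded on $[0,T]$ iff each $d_i$ is.

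For the uniqueness direction I would take an arbitrary extended solution, transform it to $d$, and note that on each $[\varepsilon,t]\subset(0,T]$ the map $s\mapsto s^i d_i(s)$ is absolutely continuous with a.e. derivative $s^{i-1}\tilde B_i f(s)$ (product rule for AC functions, the $i s^{i-1}d_i$ term cancelling against $s^i\cdot(-\tfrac{i}{s})d_i$). Integrating and letting $\varepsilon\to 0^+$ — using that continuity at $0$ makes $d_i$ bounded so $\varepsilon^i d_i(\varepsilon)\to 0$ since $i\ge 1$, and that integrability of $f$ makes the integral converge — yields the explicit formula
\[
d_i(t)=\frac{\tilde B_i}{t^i}\int_0^t s^{i-1}f(s)\,ds .
\]
Every extended solution must satisfy this, so uniqueness is immediate.

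For existence I would define $d_i$ (hence $c=Vd$) by the formula above and verify it is a genuine extended solution. It is absolutely continuous on $(0,T]$ (an integral of an integrable function times a smooth factor) and satisfies the ODE by differentiation, so the only real work is continuity at $t=0$. Splitting $f(s)=f(0)+(f(s)-f(0))$ gives $d_i(t)=\tilde B_i\big(\tfrac{f(0)}{i}+\tfrac{1}{t^i}\int_0^t s^{i-1}(f(s)-f(0))\,ds\big)$, and the bound $\tfrac{1}{t^i}\abs{\int_0^t s^{i-1}(f(s)-f(0))\,ds}\le \tfrac{1}{t}\int_0^t\abs{f(s)-f(0)}\,ds\to 0$, which is precisely the Lebesgue-point hypothesis, shows $d_i(t)\to \tilde B_i f(0)/i$. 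Thus $d$ extends continuously with $d(0)=D^{-1}\tilde B f(0)$, so $c(0)=VD^{-1}V^{-1}Bf(0)=A^{-1}Bf(0)$. A direct check that the first column of $A$ equals $B$ (diagonal entry $A_{11}=1=\sqrt{2\cdot1-1}$, and $A_{i1}=\sqrt{2i-1}=B_i$ for $i>1$) gives $Ae_1=B$, hence $A^{-1}B=e_1$ and $c(0)=f(0)e_1$. The non-existence claim then follows automatically, since any extended solution is continuous at $0$ and must attain this forced value.

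I expect the main obstacle to be the careful treatment of the singularity at $t=0$ in the two limiting arguments: ensuring $\varepsilon^i d_i(\varepsilon)\to 0$ in the uniqueness step, which relies on the a priori boundedness supplied by the continuity requirement at the origin, and establishing the limit of $d_i(t)$ in the existence step, which is exactly where the Lebesgue-point hypothesis is indispensable — without it the averaged integral need not converge and the candidate solution could fail to be continuous at $0$.
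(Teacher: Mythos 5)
Your proposal is correct and follows essentially the same route as the paper's proof: diagonalize via $A=VDV^{-1}$ to decouple into scalar singular ODEs, derive the explicit formula $\tilde c_j(t)=\frac{d_j}{t^j}\int_0^t s^{j-1}f(s)\,ds$ for both existence and uniqueness (with the same limiting arguments at $t=0$, including $\varepsilon^j \tilde c_j(\varepsilon)\to 0$ and the Lebesgue-point hypothesis), and conclude $c(0)=A^{-1}Bf(0)=f(0)e_1$ from $Ae_1=B$. Your continuity-at-zero estimate via the pointwise bound $s^{i-1}/t^{i-1}\le 1$ is a marginally more direct version of the paper's change-of-variables computation, but the argument is the same in substance.
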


\begin{proof}
    Since $A$ is diagonalizable, the problem can be effectively reduced to $N$ $1$-dimensional problems. 
    Recall $A = VDV^{-1}$ where $D=\mathrm{diag}\,(1,2,\dots,N)$. 
    We see the LegS ODE \eqref{eq:legs} could be rewritten as 
    \begin{align*}
        \derivative{c}(t) 
        &= -\frac{1}{t} A c(t) + \frac{1}{t} B f(t) 
        = -\frac{1}{t} VDV^{-1} c(t) + \frac{1}{t} B f(t). 
    \end{align*}
    Multiply both sides by $V^{-1}$ and denote $\tilde{c}(t) = V^{-1}c(t)$. Then, the ODE becomes
    \begin{align*}
        \derivative{\tilde{c}}(t) = V^{-1}\derivative{c}(t) &= -\frac{1}{t} D V^{-1} c(t) + \frac{1}{t} V^{-1}B f(t) = -\frac{1}{t} D\tilde{c}(t) + \frac{1}{t} V^{-1}B f(t),
    \end{align*}
    which is a decoupled ODE with respect to $\tilde{c}$. Recalling $D_{jj} = j$, we see that the $j$-th component of the above equation is
    \begin{align}   
        \derivative{\tilde{c}}_j(t)&= -\frac{j}{t}\tilde{c}_j(t) + \frac{d_j}{t} f(t) \label{eq:1d_legs}
    \end{align}
    where $d_j = (V^{-1}B)_j$ and $\tilde{c}_j$ is $j$-th component function of $\tilde{c}$. 
    Since $V$ and $V^{-1}$ are absolutely continuous bijective maps from $\mathbb{R}^N$ to $\mathbb{R}^N$, 
    the existence and uniqueness of the solution of the LegS ODE is satisfied if and only the existence and uniqueness of the solution of the ODE \eqref{eq:1d_legs} is satisfied for all 
    $j \in \{1, 2, ..., N\}$. 

    We now proceed by examining the existence and uniqueness of the solution 
    of the ODE \eqref{eq:1d_legs}. 
    We first establish existence by presenting the explicit form of the solution.  
    Define $\tilde{c}_j\colon[0,T]\to\mathbb{R}$ as
    \begin{equation}    \label{eq:closed_form_solution}
        \tilde{c}_j(t) = \begin{cases}
            \frac{d_j}{t^j} \int^t_0 s^{j-1} f(s) ds  & \text{if} \quad t \in (0,T] \\
            \frac{d_j}{j} f(0)  & \text{if} \quad t = 0.
        \end{cases}
    \end{equation}
    By fundamental theorem of calculus, $\frac{d}{dt} \pr{ \int^t_0 s^{j-1} f(s) ds } = t^{j-1} f(t)$ holds for almost all $t\in(0,T]$ and thus $\tilde{c}_j$ is differentiable for almost all $t\in(0,T]$. 
    Therefore,
    \[
        t^j \pr{ \derivative{\tilde{c}}_j(t) + \frac{j}{t} \tilde{c}_j(t) } 
        = \frac{d}{dt} (t^j\tilde{c}_j(t)) 
        = \frac{d}{dt} \pr{ d_j \int^t_0 s^{j-1} f(s) ds }
        = d_j t^{j-1} f(t) 
    \]
    holds for almost all $t\in(0,T]$. 
    Dividing both sides by $t^j$, we conclude $\tilde{c}_j$ satisfies \eqref{eq:1d_legs} for almost all $t\in(0,T]$.
    
    We now show $\tilde{c}_j$ is continuous on $[0,T]$. 
    It is sufficient to check $\tilde{c}_j$ is continuous at $t=0$ by showing
    \begin{align*}
        \lim_{t \to 0} \frac{d_j}{t^j} \int^t_0 s^{j-1} f(s) ds &= \frac{d_j}{j}f(0).
    \end{align*}
    Since $f$ is locally integrable and has a Lebesgue point at $t=0$, observe that
    \begin{equation*}
        0 = \lim_{t \to 0} 
        \frac{1}{t} \int_0^t |f(s)-f(0)|ds  
        = 
        \lim_{t \to 0} 
        \int_0^1 |f(tx) - f(0)|dx,
    \end{equation*}
    where the second equality follows by change of variables $x = s/t$. Therefore, we could deduce
    \begin{align*}
        \abs{\lim_{t \to 0} \frac{d_j}{t^j} \int^t_0 s^{j-1} f(s) ds - \frac{d_j}{j}f(0)    
        } &\leq
        \lim_{t \to 0} 
        \frac{d_j}{t} \int_0^t \abs{ \frac{1}{t^{j-1}} s^{j-1} f(s) - \frac{1}{j} f(0)} ds \\
        &= \lim_{t \to 0} 
        d_j \int_0^1 \abs{x^{j-1} f(tx) - \frac{1}{j}f(0) } dx \\
        &\leq \lim_{t \to 0} 
        d_j \int_0^1 x^{j-1} |f(tx) - f(0)| dx  + d_j \int_0^1 \abs{x^{j-1} - \frac{1}{j}} |f(0)| dx \\
        &\leq \lim_{t \to 0} 
        d_j \int_0^1 |f(tx) - f(0)| dx  
        + d_j |f(0)| \int_0^1 \abs{x^{j-1} - \frac{1}{j} }  dx \\
        &= 0,
    \end{align*}
    concluding that $\tilde{c}_j$ is continuous at $t=0$. 
    Lastly, $\tilde{c}_j$ is absolutely continuous on $(0,T]$, since for every $[t_0,t] \subset (0,T]$, 
    both $\frac{1}{t^j}$ and $\int^t_0 s^{j-1} f(s) ds$ are absolutely continuous on $[t_0,t]$ and therefore their product is also absolutely continuous on $[t_0,t]$. 
    Hence we conclude that $\tilde{c}_j$ is a solution of the ODE \eqref{eq:1d_legs}. 

    We now establish uniqueness. 
    Suppose $\hat{c}_j$ is another solution of the ODE \eqref{eq:1d_legs}. 
    Multiplying both sides of \eqref{eq:1d_legs} by 
    $t^j$ and reorganizing, for almost all $t\in(0,T]$ we have
    \[
        \frac{d}{dt} (t^j\hat{c}_j(t)) 
        = t^j \pr{ \derivative{\hat{c}}_j(t) + \frac{j}{t} \hat{c}_j(t) } 
        = d_j t^{j-1} f(t) .
    \]
    Since $\hat{c}_j$ is a solution, it is absolutely continuous on $(0,T]$, therefore $t^j\hat{c}_j(t)$ is absolutely continuous on $(0,T]$. 
    Thus for $[t_0,t] \subset (0,T]$, 
    by fundamental theorem of calculus we obtain
    \[
        t^j \hat{c}_j(t) - t_0^j \hat{c}_j(t_0) = d_j \int_{t_0}^t s^{j-1} f(s) ds.
    \]
    Taking limit $t_0 \to 0^+$, since $\hat{c}_j$ is a solution it is continuous at $0$, we have
    \[
        t^j \hat{c}_j(t) = d_j \int_{0}^t s^{j-1} f(s) ds.
    \]
    Dividing both sides by $t^j$ we conclude
    \begin{equation*}
       \hat{c}_j(t) = \frac{d_j}{t^j} \int^t_0 s^{j-1} f(s)ds = \tilde{c}_j(t) 
    \end{equation*}
    for all $t \in (0,T]$. 
    It remains to check $\hat{c}_j(0)=\tilde{c}_j(0)$. 
    Since $\hat{c}$ is continuous at $0$, we know $\hat{c}_j(0)=\lim_{t\to0^+}\hat{c}_j(t)$. Thus
     \begin{equation*}
        \hat{c}_j(0)
        = \lim_{t\to0^+} \hat{c}_j(t)
        = \lim_{t\to0^+} \tilde{c}_j(t)
        = \tilde{c}_j(0). 
    \end{equation*}
    Therefore, we conclude $\hat{c}_j(t) = \tilde{c}_j(t)$ for all $t\in[0,T]$, 
     the solution of the ODE \eqref{eq:1d_legs} is unique.

    As a result, we conclude the solution of the LegS ODE uniquely exists if $\tilde{c}_j(0) = \frac{d_j}{j}f(0)$, and it is given by $c = V \tilde{c}$. 
    Finally, we show the unique solution $c = V \tilde{c}$ should satisfy $c(0) = f(0)e_1$.  
    From 
    \[
        V^{-1}c_j(0) = \tilde{c}_j(0) = \frac{d_j}{j}f(0) = \frac{1}{j}(V^{-1}B)_j f(0),
    \]
    we see
    \[
        V^{-1}c(0)  = \text{diag} \pr{ 1, \frac{1}{2}, \frac{1}{3}, ... , \frac{1}{N} } (V^{-1}B) f(0) = D^{-1} V^{-1} B f(0).
    \]
    Multiplying both sides by $V$, we conclude
    \begin{align*}
        c(0) 
        = V D^{-1} V^{-1} B f(0)
        = ( V^{-1} D V )^{-1}Bf(0)
        = A^{-1}B f(0) 
        = f(0)e_1
    \end{align*}
    where $e_1 = [1, 0, \dots, 0] ^{t}$.  
    Therefore if $c_0 = f(0)e_1$, the solution exists and is unique, and otherwise, there is no solution.
\end{proof}

\begin{remark}    
Theorem~\ref{thm:exist and unique} does not guarantee that $c$ is absolutely continuous on the closed interval $[0,T]$, only on the half-open interval $(0,T]$.
    The following lemma provides a counterexample of a continuous input function $f$ such that the corresponding solution of the LegS ODE is not absolutely continuous on \( [0,T] \).    
\end{remark}

\begin{lemma}
Let $T=1/2$.
Consider the LegS ODE with $f\colon[0,T]\to\mathbb{R}$ defined as
\begin{equation*}
    f(t) =
    \begin{cases}
         \frac{d}{dt} \pr{ \frac{t^2}{\log(1/t)} \sin \pr{ 1/t }}  & \text{ if } 0< t \le1/2 \\
        0 & \text{ otherwise}.
    \end{cases}
\end{equation*}
Since $f$ is continuous on $[0,T]$ (including at $t=0$) it satisfies the conditions of Theorem~\ref{thm:exist and unique}.
However, the solution of the LegS ODE is not absolutely continuous on \( [0,T] \).

\end{lemma}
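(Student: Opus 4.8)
The plan is to reduce the non-absolute-continuity of the vector-valued $c$ to that of a single, explicitly computable scalar coordinate. Since absolute continuity of $c$ on $[0,T]$ forces each coordinate $c_i$ to be absolutely continuous on $[0,T]$, it suffices to exhibit one coordinate that fails to be so. I would pick the first coordinate $c_1$, which decouples from the rest because $A$ is lower triangular: the first row of $A$ equals $(1,0,\dots,0)$ and $B_1=1$, so the first line of \eqref{eq:legs} reads $\derivative{c}_1(t) = -\tfrac1t c_1(t) + \tfrac1t f(t)$ on $(0,T]$. This is a one-dimensional equation of the same form as \eqref{eq:1d_legs}, so the closed-form/uniqueness argument of Theorem~\ref{thm:exist and unique} (the case $j=1$, cf.\ \eqref{eq:closed_form_solution}) gives $c_1(t) = \tfrac1t\int_0^t f(s)\,ds$ for $t\in(0,T]$ and $c_1(0)=f(0)=0$.

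Next I would evaluate this integral in closed form. Setting $g(t)=\tfrac{t^2}{\log(1/t)}\sin(1/t)$ with $g(0)=0$, we have $f=\derivative{g}$ on $(0,T]$ by definition of $f$. A direct differentiation gives $\derivative{g}(t) = \tfrac{t(2\log(1/t)+1)}{(\log(1/t))^2}\sin(1/t) - \tfrac{1}{\log(1/t)}\cos(1/t)$, both terms of which are bounded on $(0,T]$ and tend to $0$ as $t\to0^+$; this both confirms that $f$ is continuous on $[0,T]$ with $f(0)=0$ and shows $\derivative{g}$ is bounded. Consequently $g$ is Lipschitz, hence absolutely continuous, on $[0,t]$, and the fundamental theorem of calculus gives $\int_0^t f(s)\,ds = g(t)-g(0)=g(t)$. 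Therefore
\[
c_1(t) = \frac{g(t)}{t} = \frac{t}{\log(1/t)}\sin\!\pr{\tfrac1t},\qquad c_1(0)=0.
\]

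The core of the argument is to show that this $c_1$ has infinite total variation on $[0,T]$, which rules out absolute continuity. I would test against the points $t_k = \tfrac{1}{(k+1/2)\pi}$ for $k\ge1$, all of which lie in $(0,T]$ and satisfy $\sin(1/t_k)=(-1)^k$. Writing $m_k := \abs{c_1(t_k)} = \tfrac{1}{(k+1/2)\pi\,\log((k+1/2)\pi)}$, the alternating signs give $\abs{c_1(t_k)-c_1(t_{k+1})} = m_k+m_{k+1}\ge m_k$, so the total variation of $c_1$ over $[0,T]$ is at least $\sum_{k=1}^{N-1}\abs{c_1(t_k)-c_1(t_{k+1})}\ge \sum_{k=1}^{N-1} m_k$ for every $N$. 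Since $m_k\sim \tfrac{1}{\pi k\log k}$ and $\sum_k \tfrac{1}{k\log k}=\infty$ by the integral test (as $\int \tfrac{dx}{x\log x}=\log\log x$), letting $N\to\infty$ forces the total variation to be infinite. Hence $c_1$ is not of bounded variation, so not absolutely continuous, and therefore $c$ is not absolutely continuous on $[0,T]$.

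The crux of the proof—rather than a true obstacle—is this variation estimate: the factor $1/\log(1/t)$ shrinks the classical example $t\sin(1/t)$ just enough that the resulting series is the borderline-divergent $\sum 1/(k\log k)$, so one must argue divergence via the integral test rather than by a crude $\sum 1/k$ comparison. The remaining care is in justifying $\int_0^t f = g(t)$ through absolute continuity of $g$ on $[0,t]$, ensuring the singular oscillation of $\derivative{g}$ near $0$ does not obstruct the fundamental theorem of calculus; the decoupling of $c_1$ and the reduction to one coordinate are immediate from the lower-triangular structure of $A$.
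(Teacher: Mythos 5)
Your proposal is correct and takes essentially the same approach as the paper: both reduce the problem to the decoupled first coordinate $c_1(t) = \tfrac{t}{\log(1/t)}\sin(1/t)$ (up to a nonzero constant), evaluate $c_1$ at the alternation points $t_k \approx \tfrac{1}{k\pi}$ of $\sin(1/t)$, and show divergence of the borderline series $\sum_k \tfrac{1}{k\log k}$ via the integral test. The only cosmetic differences are that you conclude through infinite total variation plus the standard fact that absolute continuity on a compact interval implies bounded variation, whereas the paper directly negates the definition of absolute continuity by exhibiting, for every $\delta>0$, disjoint intervals of total length less than $\delta$ with divergent variation sum; your explicit justification of $\int_0^t f(s)\,ds = g(t)$ via Lipschitz continuity of $g$ fills in a step the paper leaves implicit.
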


\begin{proof}
    We first show $f$ satisfies the desired conditions. 
    Since $f$ is continuous on $(0,1/2]$, it is suffices to show $f$ is continuous at $t=0$. 
    Calculating the differentiation, we see that
    \begin{equation*}
        f(t)= 
        \frac{d}{dt}  \pr{ \frac{t^2}{\log(1/t)} \sin(1/t) }
        = \frac{2t}{\log(1/t)} \sin(1/t)
        + \frac{t}{\log^2(1/t)} \sin(1/t)
        - \frac{1}{\log(1/t)} \cos(1/t) 
    \end{equation*}
    holds for $t\in(0,1/2]$. 
    Since $\lim_{t\to0^+} \log(1/t) = \lim_{u\to\infty} \log(u) = \infty$, it follows that $\lim_{t\to0^+} f(t) = 0$. 
    Therefore, $f$ is continuous at $t=0$, so it is integrable and has a Lebesgue point at $0$. 

    Now, we show that $c(t)$ is not absolutely continuous on $[0,T]$. It is sufficient to prove that the first component $c_1(t)$ is not absolutely continuous on $[0,T]$. 
    Plugging the definition of $f(t)$ to \eqref{eq:closed_form_solution}, we obtain
    \begin{equation*}
        c_1(t) = \begin{cases}
            c_1\pr{ \frac{1}{2} } & \text{ if } t>1/2 \\
            d_1 \frac{t}{\log(1/t)} \sin(1/t) & \text{ if } t\in (0,1/2] \\
            0 & \text{ if } t=0.
        \end{cases}
    \end{equation*}
    Let $\delta>0$ be arbitrary positive number. 
    For $N>0$ such that $\frac{1}{(2N+1) \pi + \frac{\pi}{2}} < \min\set{\delta,\frac{1}{2},T}$,  consider
    \begin{equation*}
        t_n = \begin{cases}
           \pr{ (2N+n+\frac{1}{2})\pi }^{-1} & \text{ if } n \text{ is odd } \\
            \pr{ (2N+n)\pi }^{-1} & \text{ if } n \text{ is even} .
        \end{cases}
    \end{equation*}
    Define $x_n = t_n$ and $y_n=t_{n+1}$ for $n\ge1$. 
    Then $\sum_{n=1}^{\infty} \pr{ y_{n} - x_{n} } = t_1
         = \frac{1}{(2N+1) \pi + \frac{\pi}{2}}< \delta$.
    However, since
    \begin{equation*}
        \abs{ c_1(x_n) - c_1(y_{n}) } 
        = \abs{ c_1(t_n) - c_1(t_{n+1}) }
        = \begin{cases}
            \abs{c_1(t_n)} & \text{ if $n$ is odd }   \\
            \abs{c_1(t_{n+1})} & \text{ if $n$ is even}   ,
        \end{cases}
    \end{equation*}
    we have
    \begin{equation*}
        \begin{aligned}
            \sum_{n=1}^{\infty} \abs{ c_1(x_n) - c_1(y_{n}) } 
            &= \abs{c_1(t_1)} + 2\sum_{m=1}^{\infty} \abs{ c_1(t_{2m+1})} \\
            &> 2 d_1 \sum_{m=1}^{\infty} \frac{t_{2m+1}}{\log(1/t_{2m+1})} 
            = 2 d_1 \sum_{m=1}^{\infty} \frac{1}{ 
(2N + 2m + 3/2)\pi \log((2N + 2m + 3/2)\pi)} \\
            &> 2 d_1 \sum_{m=1}^{\infty} \frac{1}{ 
2\pi(N + m + 1) \log(2\pi(N + m + 1))} 
            = 2 d_1 \sum_{m=N+2}^{\infty} \frac{1}{ 
            2m\pi \log(2m\pi)} 
            = \infty.
        \end{aligned}
    \end{equation*}
    The last equality follow from integral test:
    \begin{equation*}
        \sum_{m=N+2}^{\infty} \frac{1}{ 
            2m\pi \log(2m\pi)}
        \ge \int_{N+2}^{\infty} \frac{1}{2\pi t \log (2\pi t)} dt
        = \frac{1}{2\pi} \int_{2\pi(N+2)}^{\infty} \frac{1}{u \log u} du 
        = [\log(\log(u))]_{2\pi(N+2)}^{\infty}
        = \infty. 
    \end{equation*}
    Since $(x_n,y_n) \subset [0,T]$ are disjoint and $\delta>0$ was arbitrary, we conclude $c_1$ is not absolutely continuous on $[0,T]$.
\end{proof}



\begin{remark}
   Recall that the motivation of the LegS ODE is to provide an online approximation of the input function $f$. 
   By change of variables, one can show that $\{ \frac{\sqrt{2j-1}}{t} P_{j-1}\pr{\frac{2s}{t} -1} \}_{j \in \mathbb{N}}$ is an orthogonal basis on the interval $[0, t]$, with respect to the $L^2([0,t])$ norm. 
    The following corollary shows that the solution found in Theorem \ref{thm:exist and unique} is the projection of $f$ onto this basis.
\end{remark}

\begin{corollary} \label{cor:sol_legen_form}
    The solution $c$ of the LegS ODE as defined as in Theorem \ref{thm:exist and unique} is, if it exists, an $L^2$-approximation of $f$ on $\frac{1}{t}\mathbb{I}_{[0,t]}$ for all $t \in [0,T]$ in the sense that the $j$-th component of $c(t) \in \mathbb{R}^N$ is given by
    \begin{align} \label{eq:sol_legen_form}
        c_j(t) &= \frac{1}{t} \Big\langle f(\cdot ), \sqrt{2j-1} P_{j-1}\big(\tfrac{2\,\cdot} {t}-1\big) \Big\rangle_{L^2([0,t])}
    =
    \frac{\sqrt{2j-1}}{t} \int_0^t f(s) P_{j-1}\big(\tfrac{2s} {t}-1\big) \;ds,
    \end{align}
    for all $t \in (0,T]$, where $P_{j-1}$ denotes the $(j-1)$-th Legendre polynomial. 
\end{corollary}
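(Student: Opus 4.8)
The plan is to exhibit the right-hand side of \eqref{eq:sol_legen_form} as a solution of the LegS ODE in the extended sense of Theorem~\ref{thm:exist and unique} and then invoke uniqueness. Writing $\tilde P_{j-1}$ for the shifted Legendre polynomial, so that $P_{j-1}(2s/t-1)=\tilde P_{j-1}(s/t)$, I would define
\[
\hat c_j(t)=\frac{\sqrt{2j-1}}{t}\int_0^t f(s)\,\tilde P_{j-1}(s/t)\,ds,\qquad t\in(0,T],
\]
and show that $\hat c=(\hat c_1,\dots,\hat c_N)$ is continuous on $[0,T]$, absolutely continuous on $(0,T]$, solves \eqref{eq:legs} for a.e.\ $t$, and satisfies $\hat c(0)=f(0)e_1$; the uniqueness part of Theorem~\ref{thm:exist and unique} then forces $\hat c=c$.

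For regularity and the initial value, I would pass to the monomial expansion $\tilde P_{j-1}(u)=\sum_{k=0}^{j-1}a_{jk}u^k$ and substitute $s=tu$ to write $\hat c_j(t)=\sqrt{2j-1}\sum_{k=0}^{j-1}a_{jk}\,t^{-(k+1)}F_k(t)$ with $F_k(t)=\int_0^t s^k f(s)\,ds$. Each summand is a product of the smooth factor $t^{-(k+1)}$ with the absolutely continuous $F_k$, so $\hat c_j$ is absolutely continuous on every $[\varepsilon,T]\subset(0,T]$. For continuity at $0$, I would use $\hat c_j(t)=\sqrt{2j-1}\int_0^1 f(tu)\tilde P_{j-1}(u)\,du$ and repeat the Lebesgue-point estimate of Theorem~\ref{thm:exist and unique} to obtain $\hat c_j(t)\to\sqrt{2j-1}\,f(0)\int_0^1\tilde P_{j-1}(u)\,du$; since $\int_0^1\tilde P_{j-1}(u)\,du=\tfrac12\int_{-1}^1 P_{j-1}(x)\,dx$ equals $1$ for $j=1$ and $0$ for $j\ge2$ by Legendre orthogonality, this gives $\hat c(0)=f(0)e_1$, matching the value forced in Theorem~\ref{thm:exist and unique}.

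The core is the a.e.\ verification of the ODE, which I would carry out through the monomial representation rather than by differentiating under the integral sign (not licensed by mere integrability of $f$). Differentiating term by term and using $F_k'(t)=t^kf(t)$ a.e., the contributions proportional to $f(t)/t$ have coefficients summing to $\tilde P_{j-1}(1)=1$ and thus produce $\tfrac{\sqrt{2j-1}}{t}f(t)$, while the remaining terms reassemble into $-\tfrac{\sqrt{2j-1}}{t}\int_0^1\big(\tilde P_{j-1}(u)+u\tilde P_{j-1}'(u)\big)f(tu)\,du$. The recurrence \eqref{eq:shifted_legen_rel}, applied at $x=u$, rewrites $u\tilde P_{j-1}'(u)$ as $(j-1)\tilde P_{j-1}(u)+\sum_{k=0}^{j-2}(2k+1)\tilde P_k(u)$, and recognizing $\int_0^1 f(tu)\tilde P_k(u)\,du=\tfrac{1}{\sqrt{2k+1}}\hat c_{k+1}(t)$ collapses everything to
\[
\hat c_j'(t)=\frac{\sqrt{2j-1}}{t}f(t)-\frac{j}{t}\hat c_j(t)-\frac{\sqrt{2j-1}}{t}\sum_{i=1}^{j-1}\sqrt{2i-1}\,\hat c_i(t),
\]
which is exactly the $j$-th row of $c'=-\tfrac{1}{t}Ac+\tfrac{1}{t}Bf$ after substituting $A_{ji}=\sqrt{2j-1}\sqrt{2i-1}$ for $j>i$, $A_{jj}=j$, and $B_j=\sqrt{2j-1}$.

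I expect the main obstacle to be precisely this last computation: because $f$ is only integrable with a Lebesgue point at $0$, every differentiation must be routed through the fundamental theorem of calculus applied to $F_k$, and the recurrence \eqref{eq:shifted_legen_rel} must be marshalled so that the lower-order integrals reassemble into the lower-triangular structure of $A$ with the correct coefficients. Once the four defining properties of an extended solution are checked, uniqueness in Theorem~\ref{thm:exist and unique} yields $\hat c=c$ and hence \eqref{eq:sol_legen_form}.
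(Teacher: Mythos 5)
Your proposal is correct — the monomial bookkeeping, the coefficient sum $\tilde{P}_{j-1}(1)=1$ producing the $\frac{\sqrt{2j-1}}{t}f(t)$ term, the use of \eqref{eq:shifted_legen_rel} to reassemble $\tilde{P}_{j-1}(u)+u\tilde{P}_{j-1}'(u)=j\tilde{P}_{j-1}(u)+\sum_{k=0}^{j-2}(2k+1)\tilde{P}_k(u)$ into the rows of $A$, and the initial value $\hat{c}(0)=f(0)e_1$ via $\int_0^1\tilde{P}_{j-1}(u)\,du=\tfrac12\int_{-1}^1 P_{j-1}$ all check out — but it takes a genuinely different route from the paper. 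The paper never verifies the Legendre formula against the ODE; instead it starts from the diagonalized closed form $\tilde{c}_j(t)=\frac{(V^{-1}B)_j}{t^j}\int_0^t s^{j-1}f(s)\,ds$ already established in the proof of Theorem~\ref{thm:exist and unique}, rewrites $c_j(t)=V\tilde{c}(t)$ as $\frac1t\int_0^t G_j(s/t)f(s)\,ds$ with the polynomial kernel $G_j(x)=\sum_{k}V_{jk}(V^{-1}B)_k x^{k-1}$, and identifies $G_j=\sqrt{2j-1}\,\tilde{P}_{j-1}$ purely algebraically: the vector $G$ satisfies $xG'(x)=(A-I)G(x)$, i.e.\ the recurrence \eqref{eq:shifted_legen_rel} in matrix form, with normalization $G(1)=VV^{-1}B=B$ matching $\tilde{P}_{j-1}(1)=1$. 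So the paper's route is shorter because all the measure-theoretic work (Lebesgue-point continuity at $0$, absolute continuity on $(0,T]$, a.e.\ differentiation through the fundamental theorem of calculus) was done once in Theorem~\ref{thm:exist and unique} and is simply inherited through the linear map $V$; the only new content is identifying a kernel polynomial, and the resulting identity is reused later (it is exactly the limit function $F(x)=V\,\mathrm{diag}(1,2x,\dots,Nx^{N-1})V^{-1}e_1$ of Lemma~\ref{lem:degree_bound}). Your route duplicates that regularity work for the Legendre representation — every differentiation correctly routed through $F_k(t)=\int_0^t s^k f(s)\,ds$ rather than under the integral sign, which is the right care given that $f$ is merely integrable — but in exchange it is self-contained at the level of the original lower-triangular $A$, never touching the eigendecomposition, and amounts to running the original HiPPO derivation (ODE from projection coefficients) in reverse before closing with the uniqueness clause of Theorem~\ref{thm:exist and unique}. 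Both proofs ultimately pivot on the same recurrence \eqref{eq:shifted_legen_rel} and the same evaluation at $x=1$.
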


\begin{proof}
    If we assume that the solution exists, we know by the previous theorem that $c(0) = A^{-1}B f(0)$. 
    Further, in the proof of Theorem \ref{thm:exist and unique}, we have that $c(t) = V\tilde{c}(t)$ for all $t \in (0,T]$ where
    \begin{align*}
        \tilde{c}_j(t) 
        &= \frac{\pr{V^{-1}B}_j}{t^j} \int_0^t s^{j-1}f(s)ds.
    \end{align*}
    The $j$-th component of $c$ could be expressed as 
    \begin{align}
        c_j(t) &= \sum_{k=1}^N V_{jk}
        \frac{\pr{V^{-1}B}_k}{t^k} \int_0^t s^{k-1}f(s)ds  
        = \frac{1}{t} \int_0^t 
        \underbrace{{\sum_{k=1}^N V_{jk} \pr{V^{-1}B}_k x^{k-1}}}_{=G_j(x)}
        f(s)ds \label{eq:diag_sol}
    \end{align}
    where we made the substitution $x = \frac{s}{t}$. Since other terms do not depend on the index $j$, we simplify this expression by analyzing $G_j(x)$, regarding as $x$ as a symbolic (differentiable) variable. Expression $G_j(x)$ could be rewritten as 
    \begin{align*}
        G_j(x) &= \sum_{k=1}^N V_{jk} \pr{V^{-1}B}_k x^{k-1} \\
        &= \sum_{k,l=1}^N V_{jk} x^{k-1} V^{-1}_{kl} B_l \\
        &= e_j^t V \text{diag} \pr{1, x, \dots, x^{j-1}, \dots, x^{N-1}} V^{-1}B.
    \end{align*}
    Define $G(x)$ as a length $N$ vector with its $j$-th component being $G_j(x)$, which is a polynomial of $x$.
    Then we obtain a matrix differential equation with respect to the symbolic variable $x$ :
    \begin{align*}
        x \frac{dG}{dx}(x) = \pr{A-I} G(x).
    \end{align*}
    Since we know the precise structure of matrix $A$, letting $g_{j-1}(x) = \frac{G_j(x)}{\sqrt{2j-1}} $, we can obtain the following recurrence relation for $g_j$'s, 
    \begin{align*}
        xg_{j-1}'(x) = (j-1)g_{j-1} + \sum_{k=0}^{j-2}(2k+1)g_k(x),
    \end{align*}
    which is precisely the recurrence relation \eqref{eq:shifted_legen_rel} for the shifted Legendre polynomials. Since $G_j(1) = B_j = \sqrt{2j-1}$, we conclude that $g_j$ is equal to the $(j-1)$-th shifted Legendre polynomial $\tilde{P}_{j-1}$. 
    Finally, incorporating this observation into \eqref{eq:diag_sol}, we can rewrite the solution of the LegS ODE as 
    \begin{align*}
        c_j(t) &= \frac{1}{t}\int_0^t \sqrt{2j-1} \tilde{P}_{j-1}\pr{x} f(s)ds 
        = \frac{\sqrt{2j-1}}{t} \int_0^t P_{j-1} \pr{\frac{2s}{t}-1} f(s)ds
    \end{align*}
    for all $t \in (0, T]$.
\end{proof}


\begin{remark}
    The well-posedness argument of Theorem~\ref{thm:exist and unique} crucially relies on the fact that all eigenvalues of $A$ are positive.
    To see what happens when $A$ has negative eigenvalues, consider the case $N=1$ and $A=-1$. 
    This leads to the ODE
    \begin{align*}
        \frac{d}{dt}c(t) &= \frac{1}{t} c(t) + \frac{1}{t} f(t),\qquad c(0)=c_0,
    \end{align*}
    which has solutions
    \begin{align*}
        c(t) = t \int_0^t \frac{1}{s^2}f(s)ds + Ct
    \end{align*}
    for any $C\in \mathbb{R}$. Since the initial condition does not determine the value of $C$, the solution is not unique.
\end{remark}

\begin{remark}
    If a stronger condition, such as the (one-sided) differentiability of $f(t)$ at $t=0$ is provided, the derivative of $c(t)$ at $t=0$ can be calculated as in the following lemma. In setups where $f'$ is available, this identity could be used to implement the first iteration of the forward Euler method and the bilinear method.
\end{remark}

\begin{lemma} [Behavior at $t=0$] \label{lem:t=0 calc}
Consider the setup of Theorem~\ref{thm:exist and unique}, and further assume that $f$ is one-sided differentiable at $t=0$.
Then, the one-sided derivative $c'(0)$ exists and is given by
        \begin{align}
        c'(0) 
        &= (A+I)^{-1}B f'(0).\label{eq:c'(0)}
    \end{align}
\end{lemma}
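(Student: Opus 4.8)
The plan is to work in the diagonalizing coordinates $\tilde{c}=V^{-1}c$ introduced in the proof of Theorem~\ref{thm:exist and unique}, where we already have the closed form $\tilde{c}_j(t)=\frac{d_j}{t^j}\int_0^t s^{j-1}f(s)\,ds$ for $t\in(0,T]$ and $\tilde{c}_j(0)=\frac{d_j}{j}f(0)$, with $d_j=(V^{-1}B)_j$. Since $V$ is a fixed invertible (hence smooth) linear map, the difference quotient of $c$ at $0$ is exactly $V$ times the difference quotient of $\tilde{c}$, so $c'(0)$ exists iff each scalar one-sided derivative $\tilde{c}_j'(0)$ exists, and $c'(0)=V\tilde{c}'(0)$. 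It therefore suffices to compute each $\tilde{c}_j'(0)$ and then undo the change of variables.

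First I would expand $f$ using one-sided differentiability at $0$: write $f(s)=f(0)+f'(0)s+r(s)$ where $r(s)=o(s)$ as $s\to 0^+$ (and $r$ is measurable and locally integrable, so every integral below is well defined). Substituting and using $\int_0^t s^{j-1}\,ds=t^j/j$ and $\int_0^t s^j\,ds=t^{j+1}/(j+1)$ gives
\[
\tilde{c}_j(t)=d_j\left(\frac{f(0)}{j}+\frac{f'(0)}{j+1}\,t+\frac{1}{t^j}\int_0^t s^{j-1}r(s)\,ds\right).
\]
Forming $\big(\tilde{c}_j(t)-\tilde{c}_j(0)\big)/t$ and recalling $\tilde{c}_j(0)=d_jf(0)/j$, the constant terms cancel, leaving $\frac{d_jf'(0)}{j+1}$ plus a remainder $\frac{d_j}{t^{j+1}}\int_0^t s^{j-1}r(s)\,ds$.

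The only place a genuine estimate is needed is showing this remainder vanishes as $t\to 0^+$. Given $\varepsilon>0$, one-sided differentiability supplies $\delta>0$ with $|r(s)|\le\varepsilon s$ for $0<s\le\delta$; then for $t\le\delta$,
\[
\left|\frac{1}{t^{j+1}}\int_0^t s^{j-1}r(s)\,ds\right|\le\frac{\varepsilon}{t^{j+1}}\int_0^t s^{j}\,ds=\frac{\varepsilon}{j+1},
\]
so the remainder is $o(1)$ and $\tilde{c}_j'(0)=\frac{d_j}{j+1}f'(0)$. This is essentially the continuity-at-$0$ computation from Theorem~\ref{thm:exist and unique} raised by one power of $t$; I expect no real obstacle here, only bookkeeping, and it is the crux of the argument.

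Finally I would reassemble the vector. The components $\tilde{c}_j'(0)=\frac{(V^{-1}B)_j}{j+1}f'(0)$ are precisely $\big((D+I)^{-1}V^{-1}B\big)_j f'(0)$, since $D+I=\mathrm{diag}(2,3,\dots,N+1)$. Hence
\[
c'(0)=V\tilde{c}'(0)=V(D+I)^{-1}V^{-1}B\,f'(0)=\big(V(D+I)V^{-1}\big)^{-1}B\,f'(0)=(A+I)^{-1}B\,f'(0),
\]
using $A=VDV^{-1}$ so that $V(D+I)V^{-1}=A+I$. This is the claimed identity, and $A+I$ is invertible because all eigenvalues of $A$ are positive.
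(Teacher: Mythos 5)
Your proposal is correct, and its overall architecture matches the paper's proof exactly: diagonalize via $\tilde{c}=V^{-1}c$, compute the one-sided derivative of each scalar component $\tilde{c}_j$ from the closed form $\tilde{c}_j(t)=\frac{d_j}{t^j}\int_0^t s^{j-1}f(s)\,ds$, obtain $\tilde{c}_j'(0)=\frac{d_j}{j+1}f'(0)$, and reassemble to $c'(0)=V(D+I)^{-1}V^{-1}Bf'(0)=(A+I)^{-1}Bf'(0)$. The one substantive difference is how the scalar limit is evaluated: the paper applies L'H\^{o}pital's rule to the quotient $\bigl(\int_0^t s^{j-1}f(s)\,ds-\tfrac{t^j}{j}f(0)\bigr)/t^{j+1}$, differentiating the numerator via the fundamental theorem of calculus, whereas you expand $f(s)=f(0)+f'(0)s+r(s)$ with $r(s)=o(s)$ and bound the remainder integral directly by $\varepsilon/(j+1)$. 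Your route is in fact slightly more careful under the stated hypotheses: since $f$ is only assumed measurable and integrable (plus one-sided differentiable at $0$), the numerator in the paper's quotient is differentiable only almost everywhere, so the textbook form of L'H\^{o}pital's rule does not literally apply and one would need a version for absolutely continuous functions; your $\varepsilon$--$\delta$ estimate, which is just the continuity-at-zero computation from Theorem~\ref{thm:exist and unique} raised by one power of $t$, avoids this issue entirely at no extra cost. (Incidentally, your final identity also supplies the factor $f'(0)$ that is inadvertently dropped in the paper's last displayed conclusion.)
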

\begin{proof}    
We examine the differentiability of $V^{-1}c(t) = \tilde{c}(t) = (\tilde{c}_j(t))_{j=1}^N$ by checking for each component. Recall that for the $j$-th component we have $\tilde{c}_j(t) = \frac{d_j}{t^j}\int_0^t s^{j-1}f(s)ds$ and $\tilde{c}_j(0) = \frac{d_j}{j}f(0)$. 
We will denote the one-sided derivative of $c(t)$ and $f(t)$ at $t=0$ as $c'(0)$ and $f'(0)$, respectively. 
Then,
\begin{align*}
    \lim_{t \to 0^+} \frac{\tilde{c}
_j(t) - \tilde{c}_j(0)}{t} 
    &= \lim_{t \to 0^+} d_j \frac{\frac{1}{t^j} \int_0^t s^{j-1} f(s)ds - \frac{1}{j}f(0)}{t} \\
    &= \lim_{t \to 0^+} d_j \frac{\int_0^t s^{j-1} f(s)ds - \frac{t^j}{j}f(0)}{t^{j+1}} \\
    &\overset{(1)}{=} \lim_{t \to 0^+} d_j \frac{t^{j-1}f(t) - t^{j-1} f(0)}{(j+1)t^{j}} 
    = \lim_{t \to 0^+} \frac{d_j}{j+1} \frac{f(t) - f(0)}{t} 
    = \frac{d_j}{j+1} f'(0)
\end{align*}
where L'H\^{o}pital's rule was used for $(1)$. 
 Folding back to vector form, recalling $V^{-1}c'_j(0) = \tilde{c}'_j(t) = \frac{d_j}{j+1}f'(0) = \frac{1}{j+1}(V^{-1}B)_j f'(0)$, we obtain
    \begin{align*}
        V^{-1}c'(0) 
        &=  \text{diag} \pr{ \frac{1}{2}, \frac{1}{3}, ... , \frac{1}{N+1} } 
        (V^{-1}B) f'(0) 
    \end{align*}
and hence $c'(0) = \pr{A+I}^{-1}B$. 
Note that $(A+I)^{-1}B = [1/2, 1/(2\sqrt{3}), 0, ..., 0]^{t}$.
\end{proof}






\section{Convergence of LegS discretization schemes} \label{sec:error_analysis}



In this section, we address the convergence of the numerical discretization methods introduced in Section~\ref{sec:problem setting}, i.e., do the methods produce numerical solutions $c^n$ that converge to the exact continuous-time solution $c(t)$ as $h\rightarrow 0$?

The answer is yes, but, as we discuss in Section~\ref{subsec:smooth_f}, the standard analysis based on local truncation error does not lead to a convergence guarantee for all of the schemes under consideration, and such approaches would require certain local regularity conditions on $f$, such as (Lipschitz) continuity. Rather, in Section~\ref{subsec:legs is quadrature}, we identify the numerical schemes as quadrature rules on the input function $f$. Using this insight, in Section~\ref{subsec:riemann_f}, we show that the discretization schemes are convergent under the general assumption of Riemann integrability of $f$.


Extending the framework to accommodate general Riemann integrable functions $f$ is important, given the nature of the application. The HiPPO memory unit is used in deep learning to analyze sequence data, such as audio signals. For such data, there is no inherent expectation of smoothness, and discontinuities are to be expected. Therefore, we aim to guarantee that the mathematics remains sound for such data.



\subsection{Convergence for smooth $f$} \label{subsec:smooth_f}
Discretization methods of ODEs with well-behaved right-hand-sides have a well-established theory based on the local truncation error (LTE), and the standard techniques can be applied to the LegS ODE despite the singularity at $t=0$. For example, it can be shown that LTE for the forward Euler method applied to the LegS ODE satisfies
\[
|T_k| \leq \frac{1}{2} h M_2, \qquad M_2 = \max_{t \in [0, T]} |c''(t)|.
\]
However, when $f$ is not differentiable, then $c''(t)$ may not be bounded, and this approach, as is, fails to yield a convergence guarantee.



Another issue for the approximated bilinear method is that the LTE does not converge to $0$. 
For $N=1$, the approximated bilinear method reduces to 
\begin{align*}
    c^{k+1}
    &= \frac{2k+1}{2k+3}c^k + \frac{2}{2k+3}f^{k+1} = c^k - \frac{2}{2k+3}(c^k - f^{k+1}).
\end{align*}
Using the exact solution $c(t) = \frac{1}{t}\int_0^t f(s)ds$, the exact value of the LTE is
\begin{align*}
    hT_k
    &= c(t_{k+1}) - c(t_{k}) + \frac{2}{2k+3} (c(t_k) - f^{k+1}) \\
    &= \frac{1}{(k+1)h}\int_0^{(k+1)h}f(s)ds - \frac{2k+1}{k(2k+3)h}\int_0^{kh}f(s)ds - \frac{2}{2k+3}f^{k+1} .
\end{align*}
With the linear function $f(x) = ax$ as a particular choice, we obtain that at step $n=0$,
\begin{align*}
    T_0 &= \frac{a}{2} - \frac{2a}{3} 
    = -\frac{a}{6} \ne 0.
\end{align*}
Thus, the LTE of the approximated bilinear method does not vanish as $h\rightarrow 0$.
Consequently, a naive global error analysis based on the LTE will not guarantee convergence. In Section~\ref{subsec:riemann_f}, we employ an alternative proof technique to establish convergence.




\subsection{LegS discretizations are quadratures of $f$} \label{subsec:legs is quadrature}
In this section, we provide the key insight that we can identify the discretization methods as quadrature rules on the input function $f$. 
Recall that the LegS ODE was proposed for online approximation of the input function $f(t)$ on the interval $[0,t]$.
Specifically, Corollary \ref{cor:sol_legen_form} says that the $j$-th component of the solution is given by
\begin{align*}
c_j(t) &= \frac{\sqrt{2j-1}}{t} \int_0^t P_{j-1} \Big(\frac{2s}{t} - 1 \Big) f(s) ds,
\end{align*}
where $P_{j-1}$ denotes the $(j-1)$-th Legendre polynomial, for $j=1,2,\dots,N$.
So $c_j(t)$ is a (signed) weighted integral of $f(\cdot)$ on $[0,t]$. The following lemma shows that the numerical schemes of Section~\ref{sec:problem setting} can, in fact, be interpreted as quadrature rules with uniformly spaced nodes, and in Section~\ref{subsec:riemann_f}, we show that these quadratures approximate the integral.



\begin{lemma} \label{lem:numsol_linear}
Consider applying any of the discretization  methods introduced in Section~\ref{sec:problem setting}
    (forward Euler, backward Euler, bilinear, approximate bilinear, or zero-order hold)
    to the LegS ODE \eqref{eq:legs}, with initial time $t_0=0$ and timestep $h = T/n$. 
    Then, the numerical solution $c^n$ at step $n$ can be expressed in the form
    \begin{align*}
        c^n &= \frac{1}{n} \sum_{l=0}^n \alpha^{(n)}_l f^l = \frac{1}{n} \sum_{l=0}^n \alpha^{(n)}_l f(lh),
    \end{align*}
    for all $n = 1, 2, \dots, n$, for some $\alpha^{(n)}_l \in \mathbb{R}^N$ that depend only on $l$ and $n$.
\end{lemma}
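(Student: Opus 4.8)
The plan is to exploit the fact that every one of the five discretization schemes is a \emph{linear} recurrence in the pair $(c^k, f^k, f^{k+1})$, together with the fact that the initial datum $c^0 = c(0) = f(0)e_1 = f^0 e_1$ is itself linear in $f^0$. From these two observations the claim follows by a routine induction: $c^n$ is a fixed linear functional of the data vector $(f^0, f^1, \dots, f^n)$, whose coefficient vectors are assembled only from the step indices and the fixed matrices $A$ and $B$, and therefore do not depend on the particular input $f$.

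First I would put all five schemes into the single unified form
\begin{align*}
    c^{k+1} = M_k\, c^k + u_k\, f^k + v_k\, f^{k+1}, \qquad c^0 = f^0 e_1,
\end{align*}
where $M_k \in \mathbb{R}^{N\times N}$ and $u_k, v_k \in \mathbb{R}^N$ depend only on $k$ (through $A$ and $B$) and not on $f$. Reading off the definitions of Section~\ref{sec:problem setting}, one has, for instance, $M_k = (I - \tfrac1k A)$, $u_k = \tfrac1k B$, $v_k = 0$ for forward Euler; $M_k = (I + \tfrac{1}{k+1}A)^{-1}$, $u_k = 0$, $v_k = (I+\tfrac{1}{k+1}A)^{-1}\tfrac{1}{k+1}B$ for backward Euler; both $u_k, v_k$ nonzero for bilinear; $u_k=0$ for approximate bilinear; and $M_k = e^{A\log(k/(k+1))}$, $v_k = 0$ for zero-order hold. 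The edge-case conventions at $k=0$ — the zeroing-out for forward Euler and bilinear, and the convention $e^{A\log(n/(n+1))}=0$ at $n=0$ for zero-order hold — simply prescribe specific values of $M_0, u_0, v_0$, and so remain within this linear template.

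Then I would run the induction on $n$. The base case holds since $c^0 = f^0 e_1$ is of the required form with coefficient $e_1$. For the inductive step, suppose $c^k = \sum_{l=0}^k \beta^{(k)}_l f^l$ with each $\beta^{(k)}_l \in \mathbb{R}^N$ independent of $f$. Substituting into the recurrence gives $c^{k+1} = \sum_{l=0}^{k+1} \beta^{(k+1)}_l f^l$, where
\begin{align*}
    \beta^{(k+1)}_l =
    \begin{cases}
        M_k \beta^{(k)}_l & 0 \le l \le k-1, \\
        M_k \beta^{(k)}_k + u_k & l = k, \\
        v_k & l = k+1.
    \end{cases}
\end{align*}
Since $M_k, u_k, v_k$ and the $\beta^{(k)}_l$ are all $f$-independent, so are the $\beta^{(k+1)}_l$. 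Finally, to match the stated $1/n$ prefactor it suffices to set $\alpha^{(n)}_l = n\,\beta^{(n)}_l$, which yields $c^n = \tfrac1n \sum_{l=0}^n \alpha^{(n)}_l f^l$ exactly as asserted.

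There is no genuine analytic obstacle here; the whole content is the linearity of the schemes, which makes the coefficients $\alpha^{(n)}_l$ well-defined quantities depending only on $l$ and $n$. The only points demanding care are bookkeeping: verifying that each of the five schemes really does fit the template $c^{k+1} = M_k c^k + u_k f^k + v_k f^{k+1}$, and checking that the special $k=0$ definitions preserve this structure rather than break it (they do not, as each merely fixes $M_0, u_0, v_0$). I would close with a motivating remark that the $1/n$ normalization is not forced by the recurrence but chosen deliberately: it matches the $1/t$ prefactor in the solution formula of Corollary~\ref{cor:sol_legen_form} evaluated at $t = t_n = T$, so that $\tfrac1n \alpha^{(n)}_l$ behaves like the quadrature weight $\tfrac{h}{T}\phi(lh)$ with $h = T/n$ and $\phi$ the Legendre weight — precisely the interpretation that Section~\ref{subsec:riemann_f} then exploits.
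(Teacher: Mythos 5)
Your proof is correct, and it takes a genuinely different route from the paper. You prove the lemma by a single abstract induction: every scheme fits the unified linear template $c^{k+1} = M_k c^k + u_k f^k + v_k f^{k+1}$ with $f$-independent data $M_k, u_k, v_k$ (the $k=0$ conventions merely fixing $M_0, u_0, v_0$), the initialization $c^0 = f(0)e_1$ is linear in $f^0$, and linearity propagates; the final rescaling $\alpha^{(n)}_l = n\beta^{(n)}_l$ is harmless since the statement quantifies existentially over the coefficients. The paper instead unrolls each of the five recurrences separately, introducing the commuting products $Q_n$, $\tilde{Q}_n$, $R_n$, $\tilde{R}_n$ and deriving explicit closed-form expressions \eqref{eq:forward_numsol}, \eqref{eq:back_numsol}, \eqref{eq:bilin_numsol}, \eqref{eq:apbil_numsol}, \eqref{eq:zoh_numsol} for the coefficients. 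For the literal claim of the lemma your argument is cleaner, shorter, and arguably at the right level of generality (it covers any one-step scheme that is affine in $(c^k, f^k, f^{k+1})$ with a linear initialization). What it does not deliver is what the paper's proof is really for: Lemma~\ref{lem:degree_bound} and the rate analysis in Theorem~\ref{thm:conv_rate} consume the explicit formulas for $\alpha^{(n)}_l$ directly, diagonalizing the products via $A = VDV^{-1}$ and telescoping them into rational functions of $l$ and $n$. So in the paper's architecture, the lemma is a vehicle for those closed forms, and your abstract proof would simply defer that computation to the next lemma rather than eliminate it. As a freestanding proof of the stated lemma, yours is complete; the only point worth making explicit is the one you already flag, namely that the well-definedness of the $k=0$ steps rests on the paper's zero-out and zero-order-hold conventions, and that $c^0 = f(0)e_1$ (as fixed by Theorem~\ref{thm:exist and unique}) is what makes the base case linear in $f^0$.
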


\begin{proof}

    For notational simplicity, we define $Q_n$ for $n \geq 2$, and $\tilde{Q}_n$, $R_n$, $\tilde{R}_n$ for $n \geq 1$ as follows:
    \begin{align*}
        Q_n &:= \prod_{j=1}^{n-1} \pr{I - \frac{1}{j+1}A}, \qquad
        \tilde{Q}_n := \prod_{j=1}^n \pr{I - \frac{1}{2j}A}, \\
        R_n &:= \prod_{j=1}^n \pr{I + \frac{1}{j}A}^{-1}, \qquad\,\,\,\,
        \tilde{R}_n := \prod_{j=1}^n \pr{I + \frac{1}{2j}A}^{-1}.
    \end{align*}
    We use the $\prod$ notation when the multiplications are commutative. Note that all $Q_n, \tilde{Q}_n, R_n, \tilde{R}_n$ are invertible. 
    
    We start by proving for the forward Euler method. 
    Recall that the forward Euler method yields the following recurrence relation at step $n$:
\begin{align*}
    c^{n+1} &= \pr{ I - \frac{1}{n}A}c^n + \frac{1}{n} B f^n.
\end{align*}
Repeating this procedure, we can obtain an exact formula for the numerical solution obtained by applying forward Euler method to the LegS ODE. By induction we obtain,
\begin{align*}
    c^{n+1} 
    &= \pr{ I - \frac{1}{n}A }c^n + \frac{1}{n}Bf^n \\
    &= \pr{ I - \frac{1}{n}A} \pr{ I - \frac{1}{n-1}A} c^{n-1} + \pr{ I - \frac{1}{n}A} \frac{1}{n-1}Bf^{n-1} + \frac{1}{n}Bf^n \\
    &= Q_n \pr{c^1 + Bf^1} + Q_n \sum_{l=2}^n \frac{1}{l} Q_{l}^{-1} B f^l.
\end{align*}
As explained in Section~\ref{sec:problem setting}, we `zero out' the ill-defined iteration, thereby letting $c^1=c^0$. Hence we have
\begin{align} \label{eq:forward_numsol}
    c^{n}  
    = \frac{1}{n} \sum_{l=0}^n \alpha^{(n)}_l f(lh)
    = Q_{n-1} (e_1 f^0 + B f^1 ) +  Q_{n-1} \sum_{l=2}^{n-1} \frac{1}{l} Q_l^{-1} B f^l.
\end{align}

One can verify that $\alpha_l^{(n)}$ depends only on $l$ and $n$. 

For the backward Euler method, we start from
\begin{equation*}
\begin{aligned}
    c^{n+1} 
    &= \pr{ I + \frac{1}{n+1} A  }^{-1} c^{n}
     + \pr{ I + \frac{1}{n+1} A  }^{-1} \frac{1}{n+1} B f^{n+1}.
\end{aligned}
\end{equation*}

Then, we can derive inductively 
\begin{equation*}
    \begin{aligned}
    c^{n+1} 
    &= \pr{ I + \frac{1}{n+1}A }^{-1} c^n + \pr{ I + \frac{1}{n+1}A }^{-1} \frac{1}{n+1}Bf^{n+1} \\
    &= \pr{ I + \frac{1}{n+1}A }^{-1} \pr{ I + \frac{1}{n}A }^{-1} c^{n-1} + \pr{ I + \frac{1}{n+1}A }^{-1} \pr{ I + \frac{1}{n}A }^{-1} \frac{1}{n}Bf^n + \pr{ I + \frac{1}{n+1}A }^{-1} \frac{1}{n+1}Bf^{n+1} \\
    &= R_{n+1} (c^0 + Bf^1) + R_{n+1} \sum_{l=1}^{n} \frac{1}{l+1} R_{l}^{-1} B f^{l+1}.
    \end{aligned}
\end{equation*}
Thus we can verify $c^n$ has the desired form with the specific expression
\begin{equation} \label{eq:back_numsol}
    \begin{aligned}
    c^n 
    = \frac{1}{n} \sum_{l=0}^n \alpha^{(n)}_l f(lh)
    =    R_{n} (e_1f^0 + Bf^1) + R_{n} \sum_{l=1}^{n-1} \frac{1}{l+1} R_{l}^{-1} B f^{l+1}.
    \end{aligned}
\end{equation}

For the bilinear method, we start from
\begin{align*}
    c^{n+1} &= \pr{I + \frac{1}{n+1}A/2}^{-1}\pr{I - \frac{1}{n}A/2}c^n +\pr{I + \frac{1}{n+1}A/2}^{-1} \frac{1}{2} \pr{\frac{1}{n}f^n + \frac{1}{n+1}f^{n+1}}B.
\end{align*}

Similarly, by induction, we obtain
\begin{equation*}
    \begin{aligned}
    c^{n+1} 
    &= \pr{ I + \frac{1}{n+1}A/2 }^{-1} \pr{ I - \frac{1}{n}A/2 } c^n + \pr{ I + \frac{1}{n+1}A/2 }^{-1} \frac{1}{2} \pr{\frac{1}{n}Bf^{n} + \frac{1}{n+1}Bf^{n+1} }\\
    &= \pr{ I + \frac{1}{n+1}A/2 }^{-1} \pr{ I + \frac{1}{n}A/2 }^{-1} \pr{ I - \frac{1}{n}A/2 } \pr{ I - \frac{1}{n - 1}A/2 } c^{n-1}\\
    &\phantom{=}+ \pr{ I + \frac{1}{n+1}A/2 }^{-1} \pr{ I + \frac{1}{n}A/2 }^{-1}\pr{ I - \frac{1}{n - 1}A/2 } \frac{1}{2} \pr{\frac{1}{n-1}Bf^{n-1} + \frac{1}{n}Bf^{n} }\\
    &\phantom{=}+ \pr{ I + \frac{1}{n+1}A/2 }^{-1} \frac{1}{2} \pr{\frac{1}{n}Bf^{n} + \frac{1}{n+1}Bf^{n+1} }\\
    &= \tilde{Q}_n \tilde{R}_{n+1} \pr{I+A/2} c^1 + 
    \tilde{Q}_{n} \tilde{R}_{n+1} \sum_{l=1}^{n} \tilde{Q}_l^{-1} \tilde{R}_l^{-1}  \frac{1}{2}\pr{\frac{1}{l}Bf^l + \frac{1}{l+1}Bf^{l+1}}.
    \end{aligned}
\end{equation*}

As for the forward Euler case, we `zero out' the ill-defined term in the first iteration. This yields $c^1 = \pr{I+\frac{A}{2}}^{-1}c^0 + \pr{I+\frac{A}{2}}^{-1}\pr{\frac{1}{2}f^1}$. 
Then,
\begin{equation*} 
    c^n =
    \tilde{Q}_{n-1}\tilde{R}_{n} \pr{e_1f^0
    +\frac{f^1}{2}}
    +\tilde{Q}_{n-1}\tilde{R}_{n} \sum_{l=1}^{n-1} \tilde{Q}_l^{-1} \tilde{R}_l^{-1}  \frac{1}{2}\pr{\frac{1}{l}Bf^l + \frac{1}{l+1}Bf^{l+1}}.
\end{equation*}
Rearranging terms, 
\begin{align}\label{eq:bilin_numsol}
    c^n 
    = \frac{1}{n} \sum_{l=0}^n \alpha^{(n)}_l f(lh)
    &=
    \tilde{Q}_{n-1}\tilde{R}_{n}e_1f^0
    + \tilde{Q}_{n-1}\tilde{R}_{n}
    \sum_{l=1}^{n-1} \frac{1}{2l}
    \pr{
    \tilde{Q}_l^{-1} \tilde{R}_l^{-1} 
    + \tilde{Q}_{l-1}^{-1} \tilde{R}_{l-1}^{-1} 
    }Bf^l
    + \tilde{R}_n \tilde{R}_{n-1}^{-1} \frac{1}{2n} B f^n
\end{align}
where we define $\tilde{Q}_0=\tilde{R}_0=I$. Thus we recover the desired form for $c^n$. 

For the approximate bilinear method, we start from
\begin{align*}
    c^{n+1} &= \pr{I + \frac{1}{n+1}A/2}^{-1}\pr{I - \frac{1}{n+1}A/2}c^n + \pr{I + \frac{1}{n+1}A/2}^{-1} \pr{\frac{1}{n+1}f^{n+1}}B.
\end{align*}

By induction, we obtain
\begin{equation*}
    \begin{aligned}
    c^{n+1} 
    &= \pr{ I + \frac{1}{n+1}A/2 }^{-1} \pr{ I - \frac{1}{n+1} A/2 } c^n + \pr{ I + \frac{1}{n+1}A/2 }^{-1} \pr{\frac{1}{n+1}Bf^{n+1} }\\
    &= \pr{ I + \frac{1}{n+1}A/2 }^{-1} \pr{ I + \frac{1}{n}A/2 }^{-1} \pr{ I - \frac{1}{n+1}A/2 } \pr{ I - \frac{1}{n}A/2 } c^{n-1}\\
    &\phantom{=}+ \pr{ I + \frac{1}{n+1}A/2 }^{-1} \pr{ I + \frac{1}{n}A/2 }^{-1}\pr{ I - \frac{1}{n+1}A/2 } \pr{\frac{1}{n}Bf^{n} }\\
    &\phantom{=}+ \pr{ I + \frac{1}{n+1}A/2 }^{-1} \pr{\frac{1}{n+1}Bf^{n+1} }\\
    &= \tilde{Q}_{n+1} \tilde{R}_{n+1} c^0 + 
    \tilde{Q}_{n+1} \tilde{R}_{n+1} \sum_{l=1}^{n} \tilde{Q}_{l+1}^{-1} \tilde{R}_l^{-1} \pr{\frac{1}{l+1}Bf^{l+1}}.
    \end{aligned}
\end{equation*}

Thus we can verify $c^n$ has the desired form with the specific expression
\begin{equation} \label{eq:apbil_numsol}
    \begin{aligned}
    c^n &=
    \tilde{Q}_{n}\tilde{R}_{n}c^0 + \tilde{Q}_{n}\tilde{R}_{n} \sum_{l=1}^{n-1} \tilde{Q}_{l+1}^{-1} \tilde{R}_l^{-1} \pr{\frac{1}{l+1}Bf^{l+1}}.
    \end{aligned}
\end{equation}

For the Zero-order hold method, we start from
\begin{align*}
    c^{n+1} = e^{A\log\pr{\frac{n}{n+1}}}c^n + \pr{I - e^{A \log \pr{\frac{n}{n+1}}}} A^{-1} B f^n.
\end{align*}
By induction, we obtain
\begin{align*}
    c^{n+1} &= e^{A\log\pr{\frac{n}{n+1}}} e^{A\log\pr{\frac{n-1}{n}}}c^{n-1} 
    + \pr{I-e^{A\log\pr{\frac{n}{n+1}}}} A^{-1}Bf^n
    + e^{A\log\pr{\frac{n}{n+1}}} \pr{I - e^{A\log\pr{\frac{n-1}{n}}}} A^{-1}Bf^{n-1} \\
    &= e^{A\log\pr{\frac{n-1}{n+1}}}c^{n-1}
    + \pr{I-e^{A\log\pr{\frac{n}{n+1}}}} A^{-1}Bf^n
    +  \pr{e^{A\log\pr{\frac{n}{n+1}}} - e^{A\log\pr{\frac{n-1}{n+1}}}} A^{-1}Bf^{n-1} \\
    &= \sum_{k=0}^{n} \pr{e^{A\log\pr{\frac{n+1-k}{n+1}}} - e^{A\log\pr{\frac{n-k}{n+1}}}}A^{-1}Bf^{n-k}.
\end{align*}
Note that in this expression, we are denoting (with abuse of notation) $e^{\log{0}}=0$.
Thus we can verify $c^n$ has the desired form with the specific expression
\begin{align} \label{eq:zoh_numsol}
    c^n = \frac{1}{n} \sum_{l=0}^n \alpha^{(n)}_l f(lh) = \sum_{l=0}^{n-1} \pr{e^{A\log\pr{\frac{l+1}{n}}} - e^{A\log\pr{\frac{l}{n}}}} A^{-1}B f^l.
\end{align}
\end{proof}


\subsection{Convergence for Riemann integrable $f$} \label{subsec:riemann_f}

In this section, we prove the convergence of all discretization methods of interest for Riemann integrable $f$'s. 
In particular, we prove the convergence of the approximate bilinear method, justifying its use for the experiments in the HiPPO paper \citep{gu2020hippo}. 
The results are summarized in the following theorem:

\begin{theorem} [Convergence of discretization schemes for Riemann integrable $f$]
\label{thm:conv_riemann}
    Consider the LegS equation \eqref{eq:legs} with dimension $N\ge 1$ and domain $t \in [0,T]$, where $T>0$. 
    Assume $f$ is Riemann integrable on $[0,T]$.
    Let $n \ge 1$ be the number of mesh points and let $h=T/n$.
    Consider discretization methods with initialization
    \[
    c^0 = f(0) e_1
    \]
    and iterations
    \begin{align*}
    c^{k+1} &= c^k + h \Phi(t_k,t_{k+1},c^k,c^{k+1}; h) 
    \end{align*}
    using mesh points $t_k = kh$ for $k = 0, 1, 2, \dots, n$, where $\Phi$ is the one-step integrator defined by the given discretization method. Denote the exact solution at step $n$ as $c(t_n) = c(nh) \in \mathbb{R}^N$. Then, for all the forward Euler, backward Euler, bilinear, approximate bilinear, and zero-order hold methods defined in Section \ref{sec:problem setting}, we have convergence of the numerical solution to the exact solution in the sense that
\[
\|c^n-c(t_n)\|\rightarrow 0,\qquad\text{ as } \quad n \rightarrow \infty.
\]
\end{theorem}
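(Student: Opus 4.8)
The plan is to combine the two structural results already in hand: Lemma~\ref{lem:numsol_linear}, which writes every scheme's output as a quadrature $c^n=\frac1n\sum_{l=0}^n \alpha_l^{(n)} f(lh)$ on the uniform grid, and Corollary~\ref{cor:sol_legen_form}, which exhibits the exact endpoint value. Note that $t_n=nh=T$, so $c(t_n)=c(T)$ is a fixed integral; after the substitution $s=Tx$ it reads $c_j(T)=\sqrt{2j-1}\int_0^1 \tilde P_{j-1}(x)\,g(x)\,dx$ with $g(x):=f(Tx)$, which is Riemann integrable on $[0,1]$ exactly when $f$ is on $[0,T]$. Thus the theorem reduces to one statement: the signed quadrature measures $\mu_n:=\frac1n\sum_{l=0}^n \alpha_l^{(n)}\,\delta_{l/n}$ integrate every Riemann integrable $g$ to the correct limit.

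As in Theorem~\ref{thm:exist and unique} I would diagonalize $A=VDV^{-1}$ and argue one eigencomponent at a time, since $V,V^{-1}$ are bounded and componentwise convergence suffices. In the $\lambda$-eigencoordinate ($\lambda\in\{1,\dots,N\}$) the exact value is $d_\lambda\int_0^1 x^{\lambda-1}g(x)\,dx$, which is $\tilde c_\lambda(T)$ from \eqref{eq:closed_form_solution} after $s=Tx$, and each scheme's product factors $Q_n,R_n,\tilde Q_n,\tilde R_n$ collapse to scalar products such as $\prod_{j=1}^n(1\pm\tfrac{\lambda}{2j})$ and $\prod_{j=1}^n(1+\tfrac{\lambda}{j})^{-1}$. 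The crucial analytic step is to pin down their asymptotics via the Gamma/Pochhammer estimate $\prod_{j=1}^n(1+\tfrac{\lambda}{j})\sim n^{\lambda}/\Gamma(\lambda+1)$, yielding for each method a $\lambda$-component weight of the form $\tfrac{d_\lambda}{n}\,(l/n)^{\lambda-1}\,(1+o(1))$ together with summable edge corrections. The zero-order hold case is the most transparent: its weights are literally the telescoping differences $(\tfrac{l+1}{n})^\lambda-(\tfrac{l}{n})^\lambda$ scaled by $d_\lambda/\lambda$, which the mean value theorem turns into exactly this form.

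With these estimates I would show that the scalar discrete measures converge weakly to the absolutely continuous measure $d_\lambda x^{\lambda-1}\,dx$ and have uniformly bounded total variation (the P\'olya condition): weak convergence follows by checking the partial sums $\sum_{l\le\theta n}\tfrac1n\alpha_l^{(n)}\to\int_0^\theta d_\lambda x^{\lambda-1}\,dx$ from the weight asymptotics, and the total-variation bound follows because the factors $1-\tfrac{\lambda}{2j}$, and hence the individual weights, are negative only for the finitely many indices $j<\lambda/2$, so apart from a fixed number of terms the weights are positive and sum to a bounded quantity. Convergence for Riemann integrable $g$ then follows from a Portmanteau-type argument: $g$ is bounded and continuous almost everywhere, hence continuous almost everywhere with respect to the absolutely continuous limit measure, so weak convergence plus the uniform total-variation bound gives $\int g\,d\mu_n\to\int g\,d\mu$. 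The at most $\lceil N/2\rceil$ possibly-negative small-index terms are handled separately: for each fixed $l$ the weight is $O(n^{-\lambda})\to0$, so these finitely many terms vanish and do not disturb the limit.

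I expect the main obstacle to be this third step: making the weight asymptotics simultaneously \emph{precise} enough to give weak convergence and \emph{uniform} enough (in both $l$ and $n$) to give the total-variation bound, while controlling the accumulation of the $o(1)$ errors over the growing number $n$ of nodes. The region near the singular endpoint $x=0$, i.e.\ small $l$, is the delicate one: one must check both that the finitely many sign-changing weights there vanish and that no mass concentrates at $0$ in the limit, so that the measure-zero discontinuity set of $g$, together with the origin, carries no weight. The zero-out conventions at step $0$ and the non-vanishing local truncation error of the approximate bilinear method noted in Section~\ref{subsec:smooth_f} are then automatically absorbed, since each perturbs only a single $O(1/n)$ node weight, which is negligible in the quadrature limit.
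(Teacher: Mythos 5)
Your proposal is correct in substance, and it reaches the theorem by a genuinely different route than the paper, although both start from the same quadrature identification of Lemma~\ref{lem:numsol_linear} and both diagonalize $A=VDV^{-1}$ to compute the scalar product factors. Where you diverge is in the convergence mechanism: the paper's key device (Lemma~\ref{lem:degree_bound}) is that for each component $j$ the interior weights $(\alpha^{(n)}_l)_j$ are interpolated by a polynomial $F^{(n)}_j$ of \emph{fixed} degree $j-1$, so pointwise convergence of $F^{(n)}_j$ upgrades automatically to coefficientwise and hence uniform convergence on $[0,1]$ --- this is precisely the trick that dissolves the obstacle you flag as the hard part of your plan, namely making the Gamma-product asymptotics uniform in $l$; with uniform convergence in hand, the paper finishes with an elementary Riemann-sum lemma (Lemma~\ref{lem:converge_condition}) and an $\mathcal{O}(1/n)$ treatment of the two endpoint weights (Corollary~\ref{cor:conv_to_quad}), with no signed measures, total-variation bounds, or Portmanteau argument. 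Your route buys something in exchange: by comparing directly against the eigencoordinate closed form \eqref{eq:closed_form_solution}, the limiting density is immediately $d_\lambda x^{\lambda-1}$, so you never need the paper's final identification step, in which $F$ is shown to satisfy $xF'(x)=(A-I)F(x)$ and is then matched to $\sqrt{2j-1}P_{j-1}(2x-1)$ through the shifted Legendre recurrence \eqref{eq:shifted_legen_rel} (the consistency of your normalization rests on $Ae_1=B$, which indeed holds). Your sign analysis is also sound --- the factors $1-\lambda/k$ (resp.\ $1-\lambda/(2k)$) are negative only for $k<\lambda$ (resp.\ $k<\lambda/2$), while backward Euler and zero-order hold have nonnegative eigencomponent weights outright --- so after discarding finitely many vanishing small-$l$ terms your measures are positive with uniformly bounded mass, and the Portmanteau argument for bounded, a.e.-continuous $g$ against the absolutely continuous limit is legitimate. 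To complete your version rigorously one would write the products exactly as Gamma ratios, e.g.\ $\prod_{k=1}^{l}(1+\lambda/k)=\Gamma(l+1+\lambda)/\bigl(\Gamma(l+1)\Gamma(1+\lambda)\bigr)$, which supplies estimates uniform in $l$ away from the finitely many boundary nodes; this is more computation than the paper's interpolation argument, but requires no new ideas, and as a side benefit your explicit weight asymptotics come closer to quantifying rates, which the paper defers to Theorem~\ref{thm:conv_rate}.
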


In light of Lemma \ref{lem:numsol_linear}, we can denote the iterates of the numerical schemes as $c^n = \frac{1}{n} \sum_{l=0}^n \alpha^{(n)}_l f^l$. If we can show
\[
c^n_j = \frac{1}{n} \sum_{l=0}^n \pr{\alpha^{(n)}_l}_jf^l 
\rightarrow
c_j(t) = \frac{\sqrt{2j-1}}{t}\int_0^t P_{j-1}\pr{\frac{2s}{t}-1}f(s)ds,
\]
as $n \to \infty$, and where $P_j$ is the $j$-th Legendre polynomial, we are done.


Instead of directly characterizing the coefficients $\alpha^{(n)}_l$, the key idea is to consider a function sequence defined on $[0,1]$ that interpolates those points. This significantly reduces the complexity of analyzing the asymptotic behavior of the numerical solution as the number of mesh points $n$ goes to infinity. 
We start with
an elementary lemma that enables this approach.

\begin{lemma} \label{lem:converge_condition}
    Let $f \colon [0,t] \to \mathbb{R}$ be a Riemann integrable function.
    Let $\{G^{(n)}\}_{n \in \mathbb{N}}$ be a sequence of continuous functions defined on $[0,1]$ uniformly converging to $G \in C[0,1]$.
    Then, for $h = t/n$,
    \begin{align*}
        \frac{1}{n} \sum_{l=1}^n G^{(n)} \pr{\frac{l}{n}} f\pr{lh}
        \to
        \frac{1}{t}\int_0^t G \pr{\frac{s}{t}}f(s)ds
    \end{align*}
    as $n \to \infty$. 
\end{lemma}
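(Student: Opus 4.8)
The plan is to reduce the weighted sum to two familiar approximations: a Riemann sum for the integral, and a uniform-convergence step that swaps $G^{(n)}$ for $G$. The quantity $\frac{1}{n}\sum_{l=1}^n G^{(n)}(l/n) f(lh)$ is a left/right Riemann-type sum with sample points $l/n$, so the natural target is $\int_0^1 G(x) f(tx)\,dx$, which after the change of variables $s = tx$ equals $\frac{1}{t}\int_0^t G(s/t) f(s)\,ds$. The proof splits the error into the contribution from $G^{(n)} - G$ and the contribution from the Riemann-sum discretization of the fixed continuous weight $G$.

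First I would bound $\|f\|$ on $[0,t]$: since $f$ is Riemann integrable, it is bounded, say $|f(s)| \le M_f$ for all $s$. Likewise, uniform convergence $G^{(n)} \to G$ on the compact set $[0,1]$ gives $\|G^{(n)} - G\|_\infty \to 0$ and a uniform bound $\|G^{(n)}\|_\infty \le M_G$. Then I would write
\begin{align*}
\left| \frac{1}{n}\sum_{l=1}^n G^{(n)}\!\pr{\tfrac{l}{n}} f(lh) - \frac{1}{t}\int_0^t G\!\pr{\tfrac{s}{t}} f(s)\,ds \right|
&\le \frac{1}{n}\sum_{l=1}^n \left| G^{(n)}\!\pr{\tfrac{l}{n}} - G\!\pr{\tfrac{l}{n}} \right| |f(lh)| \\
&\quad + \left| \frac{1}{n}\sum_{l=1}^n G\!\pr{\tfrac{l}{n}} f(lh) - \frac{1}{t}\int_0^t G\!\pr{\tfrac{s}{t}} f(s)\,ds \right|.
\end{align*}
The first term is at most $M_f \|G^{(n)} - G\|_\infty$, which vanishes as $n \to \infty$ by uniform convergence.

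For the second term, I would invoke the standard fact that if $g$ is Riemann integrable on $[0,t]$ then its equally spaced Riemann sums converge to $\int_0^t g$. The integrand here is $g(s) = G(s/t) f(s)$: since $G(\cdot/t)$ is continuous (hence Riemann integrable) and $f$ is Riemann integrable, their product is Riemann integrable on $[0,t]$. The sum $\frac{1}{n}\sum_{l=1}^n G(l/n) f(lh) = \frac{h}{t}\sum_{l=1}^n g(lh)$ is precisely $\frac{1}{t}$ times the right-endpoint Riemann sum of $g$ with mesh $h = t/n$, which converges to $\frac{1}{t}\int_0^t g(s)\,ds$. Combining both terms gives the claim.

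**The main obstacle** is not conceptual but a matter of care: one must confirm that the product $G(s/t) f(s)$ is genuinely Riemann integrable, so that the elementary Riemann-sum convergence theorem applies despite $f$ being merely Riemann integrable rather than continuous. This follows because the product of a continuous function and a Riemann integrable function is Riemann integrable, but it is worth stating explicitly since the whole point of this lemma is to handle non-smooth (possibly discontinuous) $f$. The uniform convergence hypothesis on $G^{(n)}$ is exactly what is needed to decouple the two sources of error and keep the first term controllable uniformly in $l$; without it, the sampling at the moving points $l/n$ could interact badly with discontinuities of $f$.
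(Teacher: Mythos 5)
Your proof is correct and follows essentially the same route as the paper's: the same triangle-inequality decomposition into a uniform-convergence term bounded by $M_f \|G^{(n)} - G\|_{\sup}$ and a Riemann-sum term for the product $G(s/t)f(s)$, whose Riemann integrability (continuous times Riemann integrable) justifies convergence of the equally spaced sums. Your explicit remark on why the product is Riemann integrable is a point the paper states more briefly, but the argument is the same.
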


\begin{proof}
    Fix $\epsilon > 0$. Since $f$ is Riemann integrable and $G$ is continuous, $G\pr{\frac{s}{t}}f(s)$ is Riemann integrable for $s \in [0,t]$. Hence we can find $N_1 \in \mathbb{N}$ such that for all $n \geq N_1$, $\left| \frac{1}{n} \sum_{l=1}^n G \pr{\frac{l}{n}}f\pr{lh} - \frac{1}{t}\int_0^t G \pr{\frac{s}{t}}f\pr{s} ds \right| < \epsilon / 2$. Since $f$ is bounded, let let $\sup_{x \in [0,t]} |f(x)| \leq M$. Then we can find $N_2 \in \mathbb{N}$ such that for all $n \geq N_2$, $\|G - G^{(n)}\| < \frac{\epsilon}{2M}$, due to the uniform convergence of $G^{(n)}$. Then, by choosing $n \in \mathbb{N}$ such that $n \geq \max\{N_1,N_2\}$ :
    \begin{align*}
        \left| \frac{1}{t}\int_0^t G \pr{\frac{s}{t} }f(s)ds - \frac{1}{n}\sum_{l=1}^n G^{(n)} \pr{\frac{l}{n}} f \pr{lh} \right| 
        &\leq \left| \frac{1}{t}\int_0^t G \pr{\frac{s}{t}} f(s)ds - \frac{1}{n} \sum_{l=1}^n G \pr{\frac{l}{n}}f\pr{lh} \right| \\
        &+ \frac{1}{n} \sum_{l=1}^n \left| G \pr{\frac{l}{n}}f\pr{lh} - G^{(n)} \pr{\frac{l}{n}} f\pr{lh} \right| \\
        &\leq \frac{\epsilon}{2} + \sup_{x \in [0,t]} \left| f(x) \right| \max_{l \in \{0, \dots, n\}} \left| G\pr{\frac{l}{n}} - G^{(n)} \pr{\frac{l}{n}} \right| \\
        &\leq \frac{\epsilon}{2} + M \| G - G^{(n)} \|_{\sup} \leq \epsilon
    \end{align*}
\end{proof}

This result is relevant since if we interpret the shifted Legendre polynomials as $G\pr{\frac{s}{t}}$ in the lemma above, we can obtain a sufficient condition for a numerical solution to converge to the exact solution of the ODE. This observation is specified in the next corollary.

\begin{corollary} \label{cor:conv_to_quad}
    Consider an array of dimension $N\geq1$ vectors $\{\pr{c^n}_j = \frac{1}{n} \sum_{l=0}^n \pr{\alpha^{(n)}_l}_j f(lh) \in \mathbb{R}^N \}_{n=1,2, \dots, \mathbb{N}}$, where $h = T/n$ and $f \colon [0,t] \to \mathbb{R}$ a Riemann integrable function.
    Assume that for each $j$, there exists a vector-valued degree $j-1$ polynomial function sequence $\{F^{(n)} \colon [0,1] \to \mathbb{R}^N \}_{n \in \mathbb{N}}$
    satisfying the following condion
    \begin{align} \label{eq:F_condition}
        \pr{F \pr{\frac{l}{n}}}_j &= \pr{\alpha^{(n)}_l}_j
    \end{align}
    for all $l \in \{1, 2, \dots, n-1\}$.
    Further assume that $\left \|  \pr{F^{(n)}(\cdot)}_j - \sqrt{2j-1} P_{j-1} \pr{2\cdot-1} \right \|_{\sup([0,1])} \to 0$ holds as $n \to \infty$ for all $j \in J$. Then, 
    \[
\|c^n-c(t_n)\|\rightarrow 0,\qquad\text{ as } \quad n \rightarrow \infty.
    \]
\end{corollary}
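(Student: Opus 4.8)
The plan is to prove the conclusion componentwise. Since $\|c^n-c(t_n)\|^2=\sum_{j=1}^N\bigl|(c^n)_j-c_j(t_n)\bigr|^2$ is a finite sum, it suffices to show $(c^n)_j\to c_j(t_n)$ for each fixed $j$, where $t_n=nh=T$ is fixed across $n$. For each $j$ I would apply Lemma~\ref{lem:converge_condition} with $t=T$, with $G^{(n)}=(F^{(n)})_j$, and with the continuous limit $G(x)=\sqrt{2j-1}\,P_{j-1}(2x-1)$; the uniform convergence $G^{(n)}\to G$ is exactly the hypothesis of the corollary. The point of identifying this particular $G$ is that, by Corollary~\ref{cor:sol_legen_form}, the limiting integral $\frac{1}{T}\int_0^T G(s/T)f(s)\,ds=\frac{\sqrt{2j-1}}{T}\int_0^T P_{j-1}(2s/T-1)f(s)\,ds$ is precisely $c_j(T)$, so the lemma delivers the target value.

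First I would split the defining sum into its interior and two endpoints,
\[
(c^n)_j = \frac{1}{n}(\alpha^{(n)}_0)_j f^0 + \frac{1}{n}\sum_{l=1}^{n-1}(\alpha^{(n)}_l)_j f^l + \frac{1}{n}(\alpha^{(n)}_n)_j f^n .
\]
On the interior range $l\in\{1,\dots,n-1\}$ the interpolation hypothesis \eqref{eq:F_condition} permits the substitution $(\alpha^{(n)}_l)_j=(F^{(n)}(l/n))_j$. To align with the summation range of Lemma~\ref{lem:converge_condition}, I would then pad the interior sum up to $l=n$, paying a single correction term,
\[
\frac{1}{n}\sum_{l=1}^{n-1}(F^{(n)}(l/n))_j f^l = \frac{1}{n}\sum_{l=1}^{n}(F^{(n)}(l/n))_j f^l - \frac{1}{n}(F^{(n)}(1))_j f^n .
\]
Applying Lemma~\ref{lem:converge_condition} to the full sum $\frac{1}{n}\sum_{l=1}^{n}(F^{(n)}(l/n))_j f^l$, whose hypotheses hold because $f$ is Riemann integrable and $(F^{(n)})_j\to G$ uniformly, yields convergence to $c_j(T)$.

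It then remains to check that the three leftover contributions vanish as $n\to\infty$: the correction $\frac{1}{n}(F^{(n)}(1))_j f^n$ and the two endpoints $\frac{1}{n}(\alpha^{(n)}_0)_j f^0$ and $\frac{1}{n}(\alpha^{(n)}_n)_j f^n$. The correction is immediate, since $(F^{(n)}(1))_j\to\sqrt{2j-1}\,P_{j-1}(1)=\sqrt{2j-1}$ is bounded and $f^n=f(T)$ is fixed, making the term $O(1/n)$. For the endpoints I would argue that each is $\frac{1}{n}$ times a coefficient that stays bounded in $n$: uniform convergence already forces the interior coefficients $(F^{(n)}(\cdot))_j$ to be uniformly bounded, while the scheme-specific closed forms of Lemma~\ref{lem:numsol_linear} (for instance $\frac{1}{n}\alpha^{(n)}_0=Q_{n-1}e_1$ for forward Euler and $\frac{1}{n}\alpha^{(n)}_0=R_n e_1$ for backward Euler) decay like $O(1/n)$ because the relevant products over the eigenvalues $1,\dots,N$ of $A$ telescope. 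Collecting the pieces gives $(c^n)_j\to c_j(T)$ for every $j$, hence $\|c^n-c(t_n)\|\to0$.

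I expect the endpoint bookkeeping to be the main obstacle. The interpolation hypothesis \eqref{eq:F_condition} controls only $l\in\{1,\dots,n-1\}$, so the terms $l=0$ and $l=n$, which carry the initial condition $c^0=f(0)e_1$ and the terminal input value, fall outside the reach of Lemma~\ref{lem:converge_condition} and must be estimated by hand. The cleanest way to dispose of them is to verify, once for each of the five schemes, that $\frac{1}{n}\alpha^{(n)}_0$ and $\frac{1}{n}\alpha^{(n)}_n$ are $O(1/n)$, exploiting the product structure of $Q_n,\tilde{Q}_n,R_n,\tilde{R}_n$; the decay ultimately stems from the positivity of the eigenvalues of $A$, the same feature that underpins well-posedness in Theorem~\ref{thm:exist and unique}.
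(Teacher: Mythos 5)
Your proposal is correct and matches the paper's proof essentially step for step: the paper performs the same decomposition $(c^n)_j = \frac{1}{n}\sum_{l=1}^{n}\bigl(F^{(n)}(l/n)\bigr)_j f(lh) + \frac{1}{n}\bigl((\alpha^{(n)}_0)_j f(0) + (\alpha^{(n)}_n)_j f(nh) - (F^{(n)}(1))_j f(nh)\bigr)$, applies Lemma~\ref{lem:converge_condition} to the padded interior sum with the uniform limit $\sqrt{2j-1}\,P_{j-1}(2\cdot-1)$, and disposes of the three leftover terms via an $O(1/n)$ bound of the form $\frac{M}{n}(2C_j+M_j)$. The only difference is in the endpoint bookkeeping: the paper simply posits uniform bounds $\sup_{n,l}\,|(\alpha^{(n)}_l)_j|\le C_j$ (later noted to hold when the corollary is applied), whereas you propose verifying decay of $\frac{1}{n}\alpha^{(n)}_0$ and $\frac{1}{n}\alpha^{(n)}_n$ scheme-by-scheme from Lemma~\ref{lem:numsol_linear} — a harmless extra step that, if anything, makes explicit a hypothesis the corollary statement leaves implicit.
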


\begin{proof}
    Since the sequence $\{F^{(n)}\}$ is a polynomial sequence defined on a compact domain, with fixed degree of order, it is uniformly bounded. 
    Hence we let $F_j \leq M_j$ for all $F \in \{F^{(k)}\}$ and $\sup_{n, l} \pr{\alpha^{(n)}_l}_j \leq C_j$.
    Also, $|f| \leq M$ from the definition of Riemann integrable functions. Fix component index $j$. 
    
    Since $\pr{F^{(k)}\pr{\frac{l}{n}}}_j = \pr{\alpha^{(n)}_l}_j$ for $l \in \{1, 2, \dots, n-1\}$, we can write
    \begin{align*}
        \pr{c^n}_j &= \frac{1}{n} \sum_{l=1}^n \pr{F^{(n)}\pr{\frac{l}{n}}}_j f(lh) + \frac{1}{n} \pr{\pr{\alpha^{(n)}_0}_j f(0) + \pr{\alpha^{(n)}_n}_j f(nh) - \pr{F^{(n)}(1)}_j f(nh)}.
    \end{align*}
    Fix $\epsilon >0$. 
    Since the function sequence $F^{(n)}$ satisfies the conditions specified in Lemma \ref{lem:converge_condition}, we can find $N_1 \in \mathbb{N}$ such that for all $n \geq N_1$, $\left| \frac{1}{n} \sum_{l=1}^n \pr{F^{(n)}\pr{\frac{l}{n}}}_j f(lh) - \frac{\sqrt{2j-1}}{t}\int_0^t P_{j-1}\pr{\frac{2s}{t}-1}f(s)ds \right| < \epsilon / 2$. Choose $n \in \mathbb{N}$ so that $n \geq \max\{N_1, \frac{2M(2C_j+M_j)}{\epsilon}\}$. Constructing the following triangular inequality,
    \begin{align*}
        \left| \pr{c^n}_j - \frac{\sqrt{2j-1}}{t}\int_0^t P_{j-1}\pr{\frac{2s}{t}-1}f(s)ds \right| 
        & \leq \left| \frac{1}{n} \sum_{l=1}^n \pr{F^{(n)}\pr{\frac{l}{n}}}_j f(lh) - \frac{\sqrt{2j-1}}{t}\int_0^t P_{j-1}\pr{\frac{2s}{t}-1}f(s)ds \right| \\
        &+ \frac{1}{n} \left| \pr{\alpha^{(n)}_0}_j f(0) + \pr{\alpha^{(n)}_n}_j f(nh) \right| + \frac{1}{n} \left| \pr{F^{(n)}(1)}_j f(nh) \right|\\
        &\leq \frac{\epsilon}{2} + \frac{M}{n}\pr{2C_j+M_j} 
        \leq \epsilon
    \end{align*}
    holds for all $j \in \{1, 2, \dots, N\}$. 
\end{proof}

The result of this corollary implies that instead of directly proving the convergence of the numerical solution $c^n$ to the exact solution, it would suffice to find a function sequence $\{F^{(n)}\}_{n \in \mathbb{N}}$ satisfying \eqref{eq:F_condition} that converges to the (scaled) shifted Legendre polynomial. 
A natural choice to construct such a sequence would be to interpolate the $n-1$ points using polynomials so that the function would satisfy \eqref{eq:F_condition}.
However, the degree of the interpolating polynomials in the sequence could diverge as $n \to \infty$, complicating the analysis of their limiting behavior. 
Surprisingly, the following lemma shows that for all discretization methods of interest, the sequence $\{F^{(n)}\}_{n \in \mathbb{N}}$ is a polynomial sequence of fixed degree. 
Note that if we interpolate $n+1$ points, i.e. including $\pr{0, \pr{\alpha^{(n)}_0}_j}, \pr{1, \pr{\alpha^{(n)}_n}_j}$ as interpolating points, the following lemma does not work. 

\begin{lemma} \label{lem:degree_bound}
    Denote the $j$-th index of the numerical solution obtained by a given discretization method as $\pr{c^n}_j = \frac{1}{n} \sum_{l=0}^n \pr{\alpha^{(n)}_l}_j f^l$ where $h = t/n$. 
    Consider the following $n-1$ points 
    \begin{align} \label{inter_points}
        \pr{\frac{1}{n}, \pr{\alpha^{(n)}_1}_j}, \pr{\frac{2}{n}, \pr{\alpha^{(n)}_2}_j}, \dots, \pr{1 - \frac{1}{n}, \pr{\alpha^{(n)}_{n-1}}_j}.
    \end{align}
    Then, for all $n \in \mathbb{N}$ and $j \in J$, there exists a degree $j-1$ polynomial $F^{(n)}_j$ that interpolates the above $n-1$ points obtained by 
    any discretization method introduced in Section~\ref{sec:problem setting}.
    Moreover, define $\{F^{(n)}\}_{n \in \mathbb{N}}$ as a vector-valued function sequence that for each $n$, the $j$-th component is $F^{(n)}_j$. 
    Then, given the eigenvalue decomposition $A=VDV^{-1}$, the sequence $\{F^{(n)}(x)\}_{n \in \mathbb{N}}$ pointwise converges to 
    \begin{align*}
        F(x) &= V \text{diag} \pr{ 1, 2x, 3x^2, \dots, Nx^{N-1}} V^{-1} e_1
    \end{align*}
    as $n \to \infty$ for all $x \in [0,1]$.
\end{lemma}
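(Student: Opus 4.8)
The plan is to reduce everything to a scalar, per-eigenvalue computation and then exploit the triangular structure of $A$. Since $A$ is lower triangular with distinct diagonal entries $1,2,\dots,N$, the eigenvector associated to eigenvalue $m$ is supported on coordinates $\geq m$, so in $A=VDV^{-1}$ we may take $V$ (and hence $V^{-1}$) lower triangular; this entails no loss of generality since $F(x)=V\,\mathrm{diag}\pr{1,2x,\dots,Nx^{N-1}}V^{-1}e_1$ is invariant under rescaling the columns of $V$. I would then work with the transformed coefficients $\beta^{(n)}_l := V^{-1}\alpha^{(n)}_l$ and study each component $\pr{\beta^{(n)}_l}_m$ as a function of $l$. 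Because $V$ is lower triangular, $\pr{\alpha^{(n)}_l}_j=\sum_{m=1}^{j}V_{jm}\pr{\beta^{(n)}_l}_m$, so once each $\pr{\beta^{(n)}_l}_m$ is shown to be (the restriction to $x=l/n$ of) a polynomial of degree at most $m-1$, the $j$-th coordinate automatically has degree at most $j-1$. This triangularity is exactly what upgrades the naive bound $N-1$ to the claimed $j-1$.

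The core step is, for each of the five schemes, to take the closed-form coefficient from Lemma~\ref{lem:numsol_linear}, replace $A$ by the scalar eigenvalue $m$ (legitimate since $Q_n,\tilde{Q}_n,R_n,\tilde{R}_n$ and the zero-order-hold exponentials are commuting functions of $A$), and simplify the accumulated product. In every case the products telescope: writing them as ratios of Gamma functions (Pochhammer symbols), I expect to reach
\[
\pr{\beta^{(n)}_l}_m = C_{n,m}\,\prod_{i=1}^{m-1}\pr{l-r_i},
\]
a constant $C_{n,m}$ depending only on $n,m$ times a falling-factorial polynomial of degree $m-1$ in $l$. For zero-order hold this is especially transparent: one gets $\pr{\beta^{(n)}_l}_m=v_m\,n\,\pr{(\tfrac{l+1}{n})^m-(\tfrac{l}{n})^m}$ with $v_m=(V^{-1}e_1)_m$, visibly a degree-$(m-1)$ polynomial in $x=l/n$. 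Viewing these expressions as polynomials in $x=l/n$ defines the components of $F^{(n)}$; since each agrees with $\pr{\alpha^{(n)}_l}_j$ at the $n-1\ge j$ nodes $l=1,\dots,n-1$, it is the unique interpolant of degree at most $j-1$, giving the first claim. I would stress that the boundary coefficients $\alpha^{(n)}_0,\alpha^{(n)}_n$ carry extra initialization and zero-out terms and do not fit this polynomial, which is precisely why only the interior nodes are interpolated.

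For the limit I would track the normalizing constant against the polynomial's leading term. Using $n\,\Gamma(n+1-m/2)/\Gamma(n+1+m/2)\sim n^{1-m}$ (and the analogous $n\cdot n!/(n+m)!\sim n^{1-m}$ for backward Euler, or $n\pr{(x+\tfrac1n)^m-x^m}\to mx^{m-1}$ for zero-order hold), together with $l=nx$ so that the degree-$(m-1)$ polynomial contributes a leading factor $n^{m-1}x^{m-1}$, I expect $\pr{\beta^{(n)}_l}_m\to w_m x^{m-1}$ uniformly on $[0,1]$, where $w=V^{-1}B=DV^{-1}e_1$ so $w_m=m\,(V^{-1}e_1)_m$. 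Transforming back, $F^{(n)}(x)\to V\,\mathrm{diag}\pr{1,x,\dots,x^{N-1}}DV^{-1}e_1=V\,\mathrm{diag}\pr{1,2x,\dots,Nx^{N-1}}V^{-1}e_1=F(x)$, using $DV^{-1}e_1=V^{-1}Ae_1=V^{-1}B$ since $A^{-1}B=e_1$ from Theorem~\ref{thm:exist and unique}; the convergence is uniform because these are polynomials of fixed degree whose coefficients converge. The main obstacle is the per-scheme telescoping bookkeeping, the trapezoidal (bilinear) case being the most laborious, since its coefficient is a sum of two Pochhammer products that must each be reduced and then shown to combine into a single degree-$(m-1)$ polynomial; the zero-order-hold and Euler cases are comparatively clean and can serve as the template.
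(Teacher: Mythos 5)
Your proposal is correct and follows essentially the same route as the paper: diagonalize via $A=VDV^{-1}$, telescope the per-eigenvalue products in the closed forms from Lemma~\ref{lem:numsol_linear} (the paper cancels products directly where you invoke Pochhammer/Gamma ratios, a cosmetic difference), substitute $x=l/n$ to read off a fixed-degree polynomial, use the lower-triangularity of $V$ and $V^{-1}$ for the degree-$(j-1)$ bound, and identify the limit through $Ae_1=B$, i.e.\ $DV^{-1}e_1=V^{-1}B$. Your explicit remarks on excluding the boundary nodes and on the bilinear case combining two products into one degree-$(m-1)$ polynomial match the paper's treatment exactly.
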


\begin{proof} 
    We start with the forward Euler case. Fix $n$. Let $p^{(n)}\colon [0,1] \to \mathbb{R}^N$ be some vector whose $j$-th component is a function interpolating the $n-1$ points in \eqref{inter_points}. Then, 
    referring to \eqref{eq:forward_numsol}, $p^{(n)}$ by construction satisfies
    \begin{align*}
        p^{(n)}\pr{\frac{l}{n}} 
    = \frac{n}{l}\prod_{k=l+1}^{n-1}\pr{ I - \frac{1}{k}A } A e_1
    \end{align*}
    for $l \in \{1, 2, ..., n-1\}$. Let $A = VDV^{-1}$ where $D \in \mathbb{R}^{N \times N}$ is the diagonal matrix with entries $(D)_{jj}=j$. Then, 
    \begin{align*}
        \prod_{k=l+1}^{n-1}\pr{ I - \frac{1}{k}A }
        &= \prod_{k=l+1}^{n-1} V \pr{ I - \frac{1}{k}D }V^{-1} 
        = V \pr{ \prod_{k=l+1}^{n-1} \pr{ I - \frac{1}{k}D } }V^{-1} \\
        &= V \pr{ \text{diag}\pr{ \prod_{k=l+1}^{n-1}\pr{ 1 - \frac{1}{k} }, \prod_{k=l+1}^{n-1}\pr{ 1 - \frac{2}{k} }, \dots, \prod_{k=l+1}^{n-1}\pr{ 1 - \frac{j}{k} }, \dots, \prod_{k=l+1}^{n-1}\pr{ 1 - \frac{N}{k} }  } }V^{-1}
    \end{align*}
    Now, since we are interested in the limiting behavior of $n \to \infty$, we can assume that $n$ is considerably larger than $N$. 
    Then we can cancel out terms in the denominator and the numerator to calculate the $i$-th term in the diagonal matrix, 
    \begin{align*}
        \frac{1}{l}\prod_{k=l+1}^{n-1}\pr{ 1- \frac{i}{k} } 
        &=  \frac{1}{l} \frac{1}{\prod_{k=l+1}^{n-1} k} \prod_{k=l+1}^{n-1}\pr{ k - i }
        = \frac{\prod_{k=1}^{i-1}\pr{ l - k }}{\prod_{k=1}^{i}\pr{ n - k }}
    \end{align*}
     where $\prod_{k=1}^{0}\pr{ l - k } = 1$. Therefore we arrive at
     \begin{align*}
    p^{(n)}\pr{\frac{l}{n}} 
        = n V \text{diag}\pr{ \frac{1}{n-1}, \frac{ \pr{ l-1}}{(n-1)(n-2)}, \dots, \frac{\prod_{k=1}^{N-2}\pr{ l - k } }{{\prod_{k=1}^{N-1}\pr{ n - k }}}, \frac{\prod_{k=1}^{N-1}\pr{ l - k } }{{\prod_{k=1}^{N}\pr{ n - k }}}} D V^{-1} e_1
     \end{align*}
     for $l \in \{1, 2, \dots, n-1\}$. 
     Now we change variables and let $l = nx$. Then, we can define a vector function $F^{(n)}$ with the $j$-th component
     \begin{align*}
         F_j^{(n)}(x) &= n e_j^{t} V \text{diag}\pr{ \frac{1}{n-1}, \frac{ \pr{ nx-1}}{(n-1)(n-2)}, \dots, \frac{\prod_{k=1}^{N-2}\pr{ nx - k } }{{\prod_{k=1}^{N-1}\pr{ n - k }}}, \frac{\prod_{k=1}^{N-1}\pr{ nx - k } }{{\prod_{k=1}^{N}\pr{ n - k }}}} D V^{-1} e_1.
     \end{align*}
    such that $F^{(n)}\pr{\frac{l}{n}} = p^{(n)}\pr{\frac{l}{n}}$ for all $l = \{1,2, \dots, n-1\}$, for all $x \in [0,1]$.      
     Notice in the above expression of $F_j^{(n)}(x)$ that the $i$-th term in the diagonal matrix is a $i-1$ degree polynomial of $x$. Since $V$ and $V^{-1}$ are both lower triangular, we conclude that $F_j^{(n)}$ is a $j-1$ degree polynomial interpolating the $n-1$ points of interest. 
    Moreover, for any fixed $x \in [0,1]$, taking the limit $n \to \infty$ yields
    \begin{align} \label{eqref:F^n_limit}
         \lim_{n \to \infty} F^{(n)}(x) &= V \text{diag} \pr{ 1, 2x, 3x^2, \dots, Nx^{N-1}} V^{-1} e_1.
     \end{align}
     This concludes the proof for the forward Euler method. 

     We use a similar approach for other methods. For the backward Euler method, referring to \eqref{eq:back_numsol}, the interpolating function satisfies
     \begin{align*}
         p^{(n)}\pr{\frac{l}{n}} &= \frac{n}{l}\prod_{k=l}^n \pr{I + \frac{1}{k}A}^{-1}Ae_1
     \end{align*}
     for $l \in \{1, 2, \dots, n-1\}$. Then the product term in the RHS is equal to
     \begin{align*}
         \prod_{k=l}^{n}\pr{ I + \frac{1}{k}A }^{-1}
        &= \prod_{k=l+1}^{n} V \pr{ I + \frac{1}{k}D }^{-1} V^{-1}
        = V \pr{ \prod_{k=l}^{n} \pr{ I + \frac{1}{k}D } }^{-1} V^{-1} \\
        &= V \pr{ \text{diag}\pr{ \prod_{k=l}^{n}\pr{ 1 + \frac{1}{k} }^{-1}, \prod_{k=l}^{n}\pr{ 1 + \frac{2}{k} }^{-1}, \dots, \prod_{k=l}^{n}\pr{ 1 + \frac{j}{k} }^{-1}  } } V^{-1}
     \end{align*}
     and by canceling out terms assuming $n$ is large,
     \begin{align*}
         \frac{1}{l}\prod_{k=l}^{n}\pr{ 1 + \frac{i}{k} }^{-1} 
        &=  \frac{1}{l}  \frac{\prod_{k=l}^{n} k}{\prod_{k=l}^{n} \pr{ k + i }}
        = \frac{\prod_{k=1}^{i-1}\pr{ l + k }}{\prod_{k=1}^{i}\pr{ n + k }}.
     \end{align*}
    Similar as in the forward Euler case, we can define the interpolating polynomial $F_j^{(n)}$ as 
     \begin{align*}
          F_j^{(n)}(x) &= n e_j^{t} V \text{diag}\pr{ \frac{1}{n+1}, \frac{ \pr{ nx+1}}{(n+1)(n+2)}, \dots, \frac{\prod_{k=1}^{N-2}\pr{ nx + k } }{{\prod_{k=1}^{N-1}\pr{ n + k }}}, \frac{\prod_{k=1}^{N-1}\pr{ nx + k } }{{\prod_{k=1}^{N}\pr{ n + k }}}} D V^{-1} e_1.
     \end{align*}
     By the same reasoning as for the forward Euler case, we conclude that $F_j^{(n)}$ is a $j-1$ degree polynomial. Moreover, it converges to \eqref{eqref:F^n_limit} pointwise as $n \to \infty$ for $x \in [0,1]$ as $n \to \infty$.

     For the bilinear method, referring to \eqref{eq:bilin_numsol}, the interpolating function satisfies
     \begin{align*}
         p^{(n)}\pr{\frac{l}{n}} &= \frac{n}{2l} \pr{I+\frac{1}{2n}A}^{-1} 
         \pr{
         \prod_{k=l}^{n-1} 
         \pr{I - \frac{1}{2k}A}\pr{I + \frac{1}{2k}A}^{-1}
         + \prod_{k=l+1}^{n-1} 
         \pr{I - \frac{1}{2k}A}\pr{I + \frac{1}{2k}A}^{-1}
         }Ae_1 \\
         &= \frac{n}{2l} \pr{I+\frac{1}{2n}A}^{-1} 
        \pr{I + \pr{I-\frac{1}{2l}A}\pr{I+\frac{1}{2l}A}^{-1}}
         \prod_{k=l+1}^{n-1} 
         \pr{I - \frac{1}{2k}A}\pr{I + \frac{1}{2k}A}^{-1}
         Ae_1
     \end{align*}
     for $l \in \{1, 2, \dots, n-1\}$. 
     For the RHS, the last term simplifies to 
    \begin{align*}
        \prod_{k=l+1}^{n-1} 
         \pr{I - \frac{1}{2k}A}\pr{I + \frac{1}{2k}A}^{-1} 
         &= V \prod_{k=l+1}^{n-1}  \pr{I - \frac{1}{2k}D}\pr{I + \frac{1}{2k}D}^{-1}  V^{-1} \\ 
         &= V \pr{ \text{diag}\pr{ \prod_{k=l+1}^{n-1}\pr{\frac{k-1/2}{k+1/2} }, \prod_{k=l+1}^{n-1}\pr{\frac{k-1}{k+1} }, \dots, \prod_{k=l+1}^{n-1} \pr{ \frac{k-j/2}{k+j/2} }}}V^{-1}
    \end{align*}
    and the prefix terms simplify to 
    \begin{align*}
        &\frac{n}{2l} \pr{I+\frac{1}{2n}A}^{-1} 
        \pr{I + \pr{I-\frac{1}{2l}A}\pr{I+\frac{1}{2l}A}^{-1}}\\
        &= \frac{n}{2l} V \pr{I+\frac{1}{2n}D}^{-1} 
        \pr{I + \pr{I-\frac{1}{2l}D}\pr{I+\frac{1}{2l}D}^{-1}} V^{-1} \\
        &= V \pr{\text{diag} \pr{ 
        \pr{\frac{n^2}{(n+1/2)(l+1/2)}}, \pr{\frac{n^2}{(n+1)(l+1)}}, \dots, \pr{\frac{n^2}{(n+1/2j)(l+1/2j)}}}} V^{-1}.
    \end{align*}
    Combining these two terms and simplifying the denominators and numerators, we obtain 
    \begin{align*}
        p^{(n)}\pr{\frac{l}{n}} 
        = V \text{diag}\pr{ \frac{4n^2}{(2n-1)(2n+1)}, \frac{ ln}{(n-1)(n+1)}, \dots, \frac{\prod_{k=1}^{N}\pr{ l - N/2 + k } }{{\prod_{k=1}^{N}\pr{ n - N/2 + k - 1}}} 
        \cdot \frac{n^2}{(l+j/2)(n+j/2)} } D V^{-1} e_1
    \end{align*}

     Define $F_j^{(n)}$ as 
     \begin{align*}
        F_j^{(n)}(x) 
        = e_j^tV \text{diag}\pr{ \frac{4n^2 }{(2n-1)(2n+1)}, 
        \frac{n^2x}{(n-1)(n+1)},
        \dots,
        \frac{\prod_{k=1}^{N-1}\pr{ nx - N/2 + k } }{{\prod_{k=1}^{N}\pr{ n - N/2 + k - 1}}} \cdot
        \frac{n^2}{n+N/2}} D V^{-1} e_1.
     \end{align*}
     By the same reasoning as for the forward Euler case, we conclude that $F_j^{(n)}$ is a $j-1$ degree polynomial. Moreover, it converges to \eqref{eqref:F^n_limit} pointwise as $n \to \infty$ for $x \in [0,1]$ as $n \to \infty$.

    For the approximated bilinear method, 
    referring to \eqref{eq:apbil_numsol}, the interpolating function satisfies
     \begin{align*}
         p^{(n)}\pr{\frac{l}{n}} &= \frac{n}{l}
         \prod_{k=l+1}^n 
         \pr{I - \frac{1}{2k}A}
         \prod_{k=l}^n 
         \pr{I + \frac{1}{2k}A}^{-1}
         Ae_1 \\
         &= \frac{n}{l}
         \pr{ I + \frac{1}{2l}A}^{-1}
         \prod_{k=l+1}^n 
         \pr{I - \frac{1}{2k}A} \pr{I + \frac{1}{2k}A}^{-1}
         Ae_1
     \end{align*}
     for $l \in \{1, 2, \dots, n-1\}$. 
     Assuming $n$ is large, the product term in RHS simplifies as
     \begin{align*}
         \prod_{k=l+1}^n 
         \pr{I - \frac{1}{2k}A} \pr{I + \frac{1}{2k}A}^{-1}
        &= \prod_{k=l+1}^n V 
         \pr{I - \frac{1}{2k}D} \pr{I + \frac{1}{2k}D}^{-1} V^{-1} \\
        &= V \pr{ \text{diag}
        \pr{ \prod_{k=l+1}^{n}\pr{ \frac{k-1/2}{k+1/2} }, \prod_{k=l+1}^{n}\pr{ \frac{k-1}{k+1} }, \dots, \prod_{k=l+1}^{n}\pr{ \frac{k - j/2}{k + j/2}}}}
        V^{-1}
     \end{align*}
     and the prefix term simplifies to
     \begin{align*}
         \frac{1}{l}\pr{ I + \frac{1}{2l}A}^{-1} 
         &= \frac{1}{l} P^{-1} \pr{ I + \frac{1}{2l}D}^{-1} P 
         = V \pr{ \text{diag} \pr{\frac{1}{l+1/2}}, \pr{\frac{1}{l+3/2}} \dots,
         \pr{\frac{1}{l+j/2}} }V^{-1}.
     \end{align*}
     Assuming that $n$ is large and canceling out terms, we obtain
    \begin{align*}
        p^{(n)}\pr{\frac{l}{n}} 
        = V \text{diag}
        \pr{ \frac{2n}{2n+1}, \frac{ln}{n(n+1)}, \dots, \frac{\prod_{k=1}^{N}\pr{ l - N/2 + k } }{{\prod_{k=1}^{N}\pr{ n - N/2 + k}}} 
        \cdot \frac{n}{l+N/2} } D V^{-1} e_1
    \end{align*}
    
    Define $F_j^{(n)}$ as 
     \begin{align*}
          F_j^{(n)}\pr{x} 
        = e_j^t V \text{diag}
        \pr{ \frac{2n}{2n+1}, \frac{n^2x}{n(n+1)}, \dots, \frac{n\prod_{k=1}^{j-1}\pr{ nx - j/2 + k } }{{\prod_{k=1}^{j}\pr{ n - j/2 + k}}}} D V^{-1} e_1
     \end{align*}
     By the same reasoning as in the forward Euler case, we conclude that $F_j^{(n)}$ is a $j-1$ degree polynomial. Moreover, it converges to \eqref{eqref:F^n_limit} pointwise as $n \to \infty$ for $x \in [0,1]$ as $n \to \infty$.

    For zero-order hold, referring to \eqref{eq:zoh_numsol}, the interpolating function satisfies
    \begin{align*}
        p^{(n)} \pr{\frac{l}{n}} &= n \pr{e^{A\log\pr{\frac{l+1}{n}}} - e^{A\log\pr{\frac{l}{n}}}} e_1
    \end{align*}
    for $l \in \{1,2, \dots, n-1\}$. Diagonalize $A$ by $A = VDV^{-1}$ and 
    \begin{align*}
        p^{(n)} \pr{\frac{l}{n}} &= n V \pr{e^{D\log\pr{\frac{l+1}{n}}} - e^{D\log\pr{\frac{l}{n}}}} V^{-1} e_1 \\
        &= V \text{diag}
        \pr{1, \frac{1}{n} \pr{\pr{l+1}^2 - l^2}, \dots, 
        \frac{1}{n^{j-1}} \pr{\pr{l+1}^j-l^j}, \dots,
        \frac{1}{n^{N-1}} \pr{\pr{l+1}^N-l^N}
        } V^{-1} e_1.
    \end{align*}
    Similarly as in the other cases, we can define $F_j^{(n)}$ as 
    \begin{align*}
        F_j^{(n)} (x)
        &= e_j^t V \text{diag} \pr{
        1, 2x + \frac{1}{n}, 
        \frac{1}{n^{j-1}} \sum_{k=0}^{j-1} \binom{j}{k} \pr{nx}^k, \dots,
        \frac{1}{n^{N-1}} \sum_{k=0}^{N-1} \binom{N}{k} \pr{nx}^k
        }V^{-1}e_1
    \end{align*}
    which pointwise converges to \eqref{eqref:F^n_limit}.
\end{proof}

Combining the results, we can prove Theorem \ref{thm:conv_riemann}.

\begin{proof} [Proof of Theorem \ref{thm:conv_riemann}]
    Given a discretization method, we can express the numerical solution as $c^n = \frac{1}{n} \sum_{l=0}^n \alpha^{(n)}_l f^l$ by Lemma \ref{lem:numsol_linear}.
    Then, define the function sequence $\{F^{(n)}\}_{n \in \mathbb{N}}$ as in Lemma \ref{lem:degree_bound}. Then for all $n \in \mathbb{N}$, the $j$-th component of $F^{(n)}\colon [0,1] \to \mathbb{R}^N$ is a $j-1$ degree polynomial with pointwise limit $F(x)$ for all $x \in [0,1]$. This implies that all the coefficients of the polynomials in the sequence converge to the coefficients of $F$, and hence we can conclude that $\{F^{(n)}\}_{n \in \mathbb{N}}$ converges to $F$ uniformly. 
    Note that we also have that all the coefficients $\alpha^{(n)}_l$ of $c^n$ are uniformly bounded, since the limit is well-defined.

    Now it suffices to show that the $j$-th component of the limit function $F(x)\colon [0,1] \to \mathbb{R}^N$ is equal to $\sqrt{2j-1}$ times the $j-1$-th shifted Legendre polynomial, $\sqrt{2j-1}P_{j-1}(2x-1)$. 
    Once this is shown, we can apply the result of Corollary \ref{cor:conv_to_quad} to conclude the proof.
    
    Recall that the exact form of $F$ is:
    \begin{align*}
        F(x) &= V \text{diag}\pr{1, 2x, 3x^2, \dots, jx^{j-1}, \dots, Nx^{N-1}} V^{-1} e_1.
    \end{align*}

    Differentiating both sides with respect to $x$, we get
    \begin{align*}
        F'(x) &= V \text{diag}\pr{0, 2, 6x, \dots, j(j-1)x^{j-2}, \dots, N(N-1)x^{N-2}} V^{-1} e_1.
    \end{align*}

    Combining these two equations, we obtain the following differential equation that holds for all $x \in [0,1]$:
    \begin{align} \label{eq:F_recur}
        xF'(x) &= (A - I) F(x)
    \end{align}

    Since we know the exact form of $A$, we can derive a recurrence relation for $F$ for arbitrary dimension $N$. 
    Rewriting the $j$-th component $F$ as
    \begin{align*}
        F_j(x) &= \sqrt{2j-1}f_{j-1}(x),
    \end{align*}
    we obtain the recurrence relation 
    \begin{align} \label{eq:f_recurrence}
        xf'_j(x) = jf_j(x) + \sum_{l=0}^{j-1} (2l+1)f_{l}(x).
    \end{align}
    Notice that \eqref{eq:f_recurrence} is exactly the recurrence relation satisfied the $j$-th shifted Legendre polynomial $\tilde{P}_j$. 
    Matching the initial condition $F(1) = Ae_1 = B$, we have $f_j(1) = 1$ for all $j \in J$. Then by induction, we can prove that $f_j(x) = \tilde{P}_j(x) = P_j(2x-1)$.
    Finally, utilizing the uniqueness of the solution for the IVP defined with ODE \eqref{eq:F_recur} and initial condition at $x=1$, we arrive at the conclusion:
    \begin{align*}
        F_j(x) &= \sqrt{2j-1}P_{j-1}(2x-1).
    \end{align*}
\end{proof}

\section{Convergence rate analysis}
In this section, we analyze the convergence rates of the numerical solutions obtained by the discretization methods introduced in Section \ref{sec:problem setting}. While \emph{convergence} was proven for all Riemann integrable input functions $f$ in Section \ref{subsec:riemann_f}, the analysis of \emph{convergence rates} will require additional regularity assumptions.



\begin{theorem} [Convergence rates of numerical solutions] \label{thm:conv_rate}
    Consider the setup in Theorem \ref{thm:conv_riemann}. Assume further that $f$ is of bounded variation on $[0,T]$. Then, we can obtain $\mathcal{O}(1/n)$ convergence rate for forward Euler, backward Euler, bilinear, approximated bilinear, and zero-order hold.     
    Further assume that $f \in C^2([0,T])$. Then, we attain $\mathcal{O}(1/n^2)$ convergence rate for the bilinear method.
\end{theorem}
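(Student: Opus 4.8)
The plan is to establish the two claims by separate techniques: the first-order rate under bounded variation via the quadrature representation behind Theorem~\ref{thm:conv_riemann}, and the second-order rate for the bilinear method via a local-truncation-error (LTE) recursion that carefully accounts for the singularity at $t=0$. For the $\mathcal{O}(1/n)$ rate, I would reuse the decomposition of the proof of Theorem~\ref{thm:conv_riemann}: writing $(c^n)_j = \frac1n\sum_{l=1}^n (F^{(n)}(l/n))_j f(lh) + \frac1n\big[(\alpha^{(n)}_0)_j f^0 + ((\alpha^{(n)}_n)_j - (F^{(n)}(1))_j) f^n\big]$ and subtracting $c_j(t_n) = \frac1t\int_0^t F_j(s/t) f(s)\,ds$ with $F_j(x) = \sqrt{2j-1}\,P_{j-1}(2x-1)$, I would split the error into (i) the right-endpoint Riemann-sum error of $\frac1n\sum_{l=1}^n F_j(l/n) f(lh)$ against the integral, (ii) the polynomial-approximation error $\frac1n\sum_{l=1}^n\big[(F^{(n)}(l/n))_j - F_j(l/n)\big] f(lh)$, and (iii) the boundary remainder. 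Since $f$ is of bounded variation and $F_j$ is a fixed polynomial, the integrand $F_j(\cdot/t)f(\cdot)$ is of bounded variation, so (i) is $\mathcal{O}(1/n)$ by the classical Riemann-sum estimate for BV functions; the explicit diagonal formulas for $F^{(n)}_j$ in Lemma~\ref{lem:degree_bound} show every coefficient converges at rate $\mathcal{O}(1/n)$, giving $\|F^{(n)}_j - F_j\|_{\sup([0,1])} = \mathcal{O}(1/n)$ and hence (ii) $=\mathcal{O}(1/n)$; and (iii) is $\mathcal{O}(1/n)$ by uniform boundedness of the coefficients. This handles all five methods at once.

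For the $\mathcal{O}(1/n^2)$ rate of the bilinear method under $f\in C^2$, I would instead run an LTE analysis. Subtracting the scheme from the exact-solution identity and using that the $f$-terms cancel in $g(t,c) = -\frac1t Ac + \frac1t Bf(t)$, the global error $e^k = c(t_k) - c^k$ satisfies, for $k\ge1$, $(I + \frac{1}{2(k+1)}A) e^{k+1} = (I - \frac{1}{2k}A) e^k + h T_k$ with $e^0 = 0$, where $T_k = \frac1h\int_{t_k}^{t_{k+1}} c'(s)\,ds - \frac12(c'(t_k) + c'(t_{k+1}))$ is exactly the trapezoidal quadrature error of $c'$ on $[t_k,t_{k+1}]$. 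Solving this recursion gives $e^n = \Psi_n e^1 + \sum_{k=1}^{n-1}\big[\prod_{m=k+1}^{n-1} M_m\big](I + \frac{1}{2(k+1)}A)^{-1} h T_k$, where $M_m = (I+\frac{1}{2(m+1)}A)^{-1}(I-\frac{1}{2m}A)$ and $\Psi_n = \prod_{m=1}^{n-1} M_m$. I would bound the first-step error (which uses the zero-out convention) as $\|e^1\| = \mathcal{O}(h)$, using $c(h) = f(0)e_1 + \mathcal{O}(h)$ (with $c'(0)$ given by Lemma~\ref{lem:t=0 calc}), the scheme value $c^1 = (I+\frac{A}{2})^{-1}(f(0)e_1 + \frac12 Bf(h))$, and the identity $Ae_1 = B$ that makes the leading terms agree.

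The two quantitative inputs are a pointwise bound on $c'''$ and a decay estimate for the products $M_m$. Differentiating the diagonalized closed form $\tilde c_j(t) = \frac{d_j}{t^j}\int_0^t s^{j-1} f(s)\,ds$ three times and expanding $f$ to second order, I would show that the a priori singular contributions of orders $t^{-3}, t^{-2}, t^{-1}$ cancel identically, leaving $|c'''(t)| \lesssim t^{-1}\omega(t)$, where $\omega(t) = \sup_{s\le t}|f''(s) - f''(0)|$ is the modulus of continuity of $f''$. The trapezoidal error estimate then gives $|T_k| \lesssim h^2\sup_{[t_k,t_{k+1}]}|c'''| \lesssim h^2 (kh)^{-1}\omega(kh)$. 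Separately, diagonalizing $M_m$ and telescoping the scalar products (eigenvalue $i$ contributes $\prod_{m=k}^{n-1}\frac{(2m-i)(m+1)}{m(2m+2+i)} \sim (k/n)^i$) shows the amplification from step $k$ to step $n$ is $\mathcal{O}(k/n)$ and $\|\Psi_n\| = \mathcal{O}(1/n)$. Combining, the first-step contribution is $\|\Psi_n\|\,\|e^1\| = \mathcal{O}(1/n)\cdot\mathcal{O}(h) = \mathcal{O}(1/n^2)$, while the LTE sum is $\sum_{k=1}^{n-1}\frac{k}{n}\,h|T_k| \lesssim \frac{h^2}{n}\sum_{k=1}^{n-1}\omega(kh) \lesssim \frac{h^2}{n}\cdot\frac1h\int_0^T\omega(s)\,ds = \mathcal{O}(h^2) = \mathcal{O}(1/n^2)$.

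The hard part will be the $c'''$ estimate together with its interaction with the scheme's contractivity. Establishing the exact cancellation of the $t^{-3}, t^{-2}, t^{-1}$ singular terms, and hence the sharp bound $|c'''(t)| \lesssim t^{-1}\omega(t)$, is a delicate computation that is essential, since the LTE genuinely blows up near $t=0$: only $f\in C^2$ is assumed, so $c\notin C^3$ in general. The payoff is that this blow-up is exactly offset by the $\mathcal{O}(k/n)$ damping of the error-propagation matrices; without exploiting the damping one would obtain only the weaker $\mathcal{O}(\log n / n^2)$ from a constant-stability bound. Verifying that the damping removes the logarithmic factor, and confirming it for every eigencomponent $i\ge1$ (not merely the slowest, $i=1$), is the crux of the second-order argument.
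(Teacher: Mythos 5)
Your proposal is correct, but it splits into two halves with different relationships to the paper. The $\mathcal{O}(1/n)$ part under bounded variation is essentially the paper's own argument: the paper performs the same three-way decomposition (Riemann/trapezoid sum error for BV integrands, which it proves as an inner lemma with the bound $V_0^t(f)/n$; the sup-norm estimate $\|F^{(n)}_j - F_j\|_{\sup} = \mathcal{O}(1/n)$ extracted from the explicit diagonal formulas of Lemma~\ref{lem:degree_bound} by writing each component of $F_j - F^{(n)}_j$ as a rational function $p_x(n)/q(n)$; and a uniformly bounded endpoint remainder $K_n/n$). The $\mathcal{O}(1/n^2)$ part for the bilinear method, however, is where you genuinely diverge. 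The paper never leaves the quadrature framework: it shows that for the bilinear scheme the denominators $\prod_{k=1}^{j}\left(n - j/2 + k - 1\right)\left(n + j/2\right)$ are symmetric about $n$, so the order-$n$ correction cancels and $\|F^{(n)}_j - F_j\|_{\sup} = \mathcal{O}(1/n^2)$; it then separately verifies that the endpoint coefficients $(\alpha^{(n)}_0)_j$ and $(\alpha^{(n)}_n)_j$ converge to $\tfrac{\sqrt{2j-1}}{2}P_{j-1}(\mp 1)$ at rate $\mathcal{O}(1/n)$, so $K_n/n = \mathcal{O}(1/n^2)$, and invokes the classical $\mathcal{O}(1/n^2)$ composite-trapezoid error for the $C^2$ integrand. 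You instead run a classical LTE-plus-stability recursion, which requires two ingredients the paper never needs: the sharp singular bound $|c'''(t)| \lesssim t^{-1}\omega(t)$ via cancellation of the $t^{-3}, t^{-2}, t^{-1}$ Taylor terms (this does check out: in the diagonalized components $\tilde{c}_j(t) = \frac{d_j}{t^j}\int_0^t s^{j-1}f(s)\,ds$, quadratic inputs produce quadratic outputs, so only the remainder contributes), and the eigencomponent damping $\prod_{m=k+1}^{n-1} M_m \sim (k/n)^i$ that converts the otherwise harmonic LTE sum $\sum_k h^3/(kh)$ from $\mathcal{O}(h^2\log n)$ into $\mathcal{O}(h^2)$. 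Your route buys a structural explanation the paper's does not: it shows the scheme's contractivity exactly offsets the LTE blow-up at $t=0$, explains why the zeroed-out first step (error $\mathcal{O}(h)$, damped by $\|\Psi_n\| = \mathcal{O}(1/n)$) does not degrade the order, and would extend naturally to nonuniform meshes; the paper's route buys uniformity (the same machinery covers all five methods and needs no regularity of $c$ itself beyond what the closed form provides). Two small cautions for your version: the telescoping Gamma-function asymptotics for the eigen-products require separate constant bounds for the finitely many indices $m \le i/2$ where factors $1 - i/(2m)$ change sign, and the conjugation by $V, V^{-1}$ should be stated once so all scalar estimates lift to norm estimates with a fixed condition-number constant; neither is a gap, just bookkeeping you flagged implicitly as the ``crux.''
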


\begin{proof}
Define $T(n) = \frac{1}{n}\sum_{l=0}^{n}
P_{j-1}\pr{\frac{2l}{n}-1} f(lh) - \frac{1}{2n} \pr{P_{j-1}(-1) f(0) + P_{j-1}(1) f(n)}$ as the result of applying the composite trapezoidal rule to the exact solution. Then we can construct a triangle inequality as
\begin{align} \label{ineq:conv_rate_start}
    \left| \pr{c^n}_j - \frac{\sqrt{2j-1}}{t}\int_0^t P_{j-1}\pr{\frac{2s}{t}-1}f(s)ds \right| 
    &\leq 
    \underbrace{\left|  \pr{c^n}_j 
    -  \sqrt{2j-1} T(n) \right|}_{(1)}\\
    &+ \underbrace{\sqrt{2j-1} \left| T(n)
    - \frac{1}{t}\int_0^t P_{j-1}\pr{\frac{2s}{t}-1}f(s)ds \right|}_{(2)}. \nonumber
\end{align}
$(1)$ corresponds to the error between the numerical solution and the Riemann sum (obtained by applying the trapezoidal rule).
For $(2)$, we prove the following lemma.
\begin{lemma}
    Assume that a real-valued function $f$ is of bounded variation on the interval $[0,t]$ with fixed $t$. Then, 
    \begin{align*}
        \abs{\frac{1}{t} \int_0^t f(s)ds - \frac{1}{n} \sum_{i=1}^{n}f\pr{\frac{it}{n}}} \leq \mathcal{O}(1/n).
    \end{align*}
\end{lemma}

\begin{proof}
    Denote $h = t/n$ and $V_a^b(f)$ be the total variation of $f$ on a closed interval $[a,b]$. Then,
    \begin{align*}
        \abs{
        \frac{1}{nh}\int_0^{nh}f(s)ds - \frac{1}{n} \sum_{i=1}^{n}f\pr{ih}
        }
        &\leq \frac{1}{nh} \sum_{i=1}^n 
        \int_{(i-1)h}^{ih} \abs{f(s) - f(ih)} ds\\
        &\leq \frac{1}{n} \sum_{i=1}^n \pr{ \sup_{x \in [(i-1)h, ih]} f(x) - \inf_{x \in [(i-1)h, ih]} f(x) } \\
        &\leq \frac{1}{n} \sum_{i=1}^n V_{(i-1)h}^{ih}(f)
        \leq \frac{V_0^t (f)}{n}.
    \end{align*}
\end{proof}

Arranging some terms and applying the lemma above, one can conclude that $(2)$ has asymptotic rate of $O(1/n)$. 
For the asymptotic rate of $(1)$, 
recall that for $\pr{c^n}_j = \frac{1}{n} \sum_{l=0}^n \pr{\alpha_l^{(n)}}_j f^l$, the interpolating function $F^{(n)}_j$ was defined such that $F^{(n)}_j \pr{\frac{l}{n}} = \pr{\alpha^{(n)}_l}_j$ for $l = \{1, \dots, n-1\}$ (endpoints are excluded). It was proved in Lemma 
\ref{lem:degree_bound} and Theorem \ref{thm:conv_riemann} that
\begin{align*}
     \lim_{n \to \infty} F^{(n)}(x) &= F(x) 
     = V \text{diag} \pr{ 1, 2x, 3x^2, \dots, Nx^{N-1}} V^{-1} e_1,
 \end{align*}
and that the $j$-th component of $F(x)$ is the scaled-shifted $(j-1)$-th Legendre polynomial, i.e., 
\begin{align*}
    F_j(x) = \sqrt{2j-1}P_{j-1}(2x-1).
\end{align*}
Rewriting $(1)$,
\begin{align*}
    \left|\pr{c^n}_j - \sqrt{2j-1} T(n) \right| 
    &= \left| \frac{1}{n} \sum_{l=0}^n \pr{\alpha_l^{(n)}}_j f^l - \sqrt{2j-1} T(n)
    \right| \\
    &\leq \frac{1}{n} \sum_{l=1}^{n-1} \left| F^{(n)}_j\pr{\frac{l}{n}}f^l - \sqrt{2j-1} P_{j-1}\pr{\frac{2l}{n}-1}f^l \right| \\
    &\phantom{=}+ \frac{1}{n} \left| \pr{\alpha^{(n)}_0}_j f^0 + \pr{\alpha^{(n)}_n}_j f^1 - \frac{\sqrt{2j-1}}{2}\pr{P_{j-1}(-1)f^0 + P_{j-1}(1)f^1} \right| \\
    &= \frac{M}{n} \sum_{l=1}^{n-1} \left| F^{(n)}_j\pr{\frac{l}{n}} - F_j\pr{\frac{l}{n}} \right| + \frac{K_n}{n} \\
    &\leq M \underbrace{\| F^{(n)}_j - F_j \|_{\sup}}_{(\star)} + \frac{K_n}{n}
\end{align*}
where $M$ is upper bound for $f$. 
Note that $K_n$ is uniformly bounded, so the second part automatically is of $\mathcal{O}(1/n)$. 
For the first term, note that $(\star)$ differs depending on which discretization method we are considering. 
Starting with the forward Euler method, recall that from Lemma~\ref{lem:degree_bound} that the interpolating function $F_j$ is defined as 
 \begin{align*}
     F_j^{(n)}(x) &= e_j^{t} V \text{diag}\pr{ \frac{n}{n-1}, \frac{ n \pr{ nx-1}}{(n-1)(n-2)}, \dots, \frac{n\prod_{k=1}^{N-2}\pr{ nx - k } }{{\prod_{k=1}^{N-1}\pr{ n - k }}}, \frac{n\prod_{k=1}^{N-1}\pr{ nx - k } }{{\prod_{k=1}^{N}\pr{ n - k }}}} D V^{-1} e_1.
 \end{align*}
for $x \in [0,1]$. 
Observing the diagonal components, 
for every $x\in[0,1]$, we can write the component of $F_j(x)-F_j^{(n)}(x)$ as $\frac{p_x(n)}{q(n)}$ with some polynomials $p_x$, $q$. Note $F_j(x)$ is constant with respect to $n$. Taking closer look at the numerators of $F_j^{(n)}(x)$, we can check the coefficients of $p_x(n)$ are polynomials with respect to $x$. Denote the coefficient of leading term as $a(x)$. 
Since $\lim_{n\to\infty} \frac{p_x(n)}{q(n)}=0$, we have $\lim_{n\to\infty} n\frac{p_x(n)}{q(n)}=a(x) \le \max_{x\in[0,1]} \abs{a(x)}$. As $F_j(x)-F_j^{(n)}(x)$ is finite dimensional, we conclude $\|F_j-F_j^{(n)}\|_{\sup} = \mathcal{O}(1/n)$ for all $j \in \{1, \dots, N\}$. 

The same argument holds with the backward Euler method, approximated bilinear method, and zero-order hold, with the only difference in computing $(\star)$.
From Lemma~\ref{lem:degree_bound} we have 
\begin{align*}
    (F_j^{(n)})_{\text{backward}} (x)
    &= n e_j^{t} V \text{diag}\pr{ \frac{1}{n+1}, \frac{ \pr{ nx+1}}{(n+1)(n+2)}, \dots, \frac{\prod_{k=1}^{N-2}\pr{ nx + k } }{{\prod_{k=1}^{N-1}\pr{ n + k }}}, \frac{\prod_{k=1}^{N-1}\pr{ nx + k } }{{\prod_{k=1}^{N}\pr{ n + k }}}} D V^{-1} e_1 \\
    (F_j^{(n)})_{\text{approx bilin}} (x)
    &= e_j^t V \text{diag}
    \pr{ \frac{2n}{2n+1}, \frac{n^2x}{n(n+1)}, \dots, \frac{n\prod_{k=1}^{j-1}\pr{ nx - j/2 + k } }{{\prod_{k=1}^{j}\pr{ n - j/2 + k}}}} D V^{-1} e_1 \\
    (F_j^{(n)})_{\text{zoh}} (x)
    &= e_j^t V \text{diag} \pr{
    1, 2x + \frac{1}{n}, 
    \frac{1}{n^{j-1}} \sum_{k=0}^{j-1} \binom{j}{k} \pr{nx}^k, \dots,
    \frac{1}{n^{N-1}} \sum_{k=0}^{N-1} \binom{N}{k} \pr{nx}^k
    }V^{-1}e_1.
\end{align*}
Using the same argument, we conclude that the backward Euler, approximate bilinear and zero-order hold methods achieve $1/n$ convergence rate.

For the bilinear method, we now assume that $f \in C^2([0,T])$. Starting from \eqref{ineq:conv_rate_start}, we first know that $(2)$ is of $\mathcal{O}(1/n^2)$ by classical quadrature results for smooth $f$. For the asymptotic rate of $(1)$, we have to consider the asymptotic rate of both $(\star)$ and $K_n$. 
The convergence rate for $(\star)$ could be obtained in a similar manner. 
From Lemma~\ref{lem:degree_bound}, $F_j^{(n)}$ obtained by applying the bilinear method is
 \begin{align*}
    F_j^{(n)}(x) 
    = e_j^t V \text{diag}\pr{ \frac{n^2 }{(n-1/2)(n+1/2)}, 
    \frac{n^2x}{(n-1)(n+1)},
    \dots,
    \frac{\prod_{k=1}^{N-1}\pr{ nx - N/2 + k } }{{\prod_{k=1}^{N}\pr{ n - N/2 + k - 1}}} \cdot
    \frac{n^2}{n+N/2}} D V^{-1} e_1.
 \end{align*}
for $x \in [0,1]$. 
Observe that denominator of the $j$-th term in the diagonal matrix is $\prod_{k=1}^{j}\pr{n - j/2 + k -1} \cdot (n + j/2) = (n-j/2)(n-j/2+1)\cdots(n+j/2-1)(n-j/2) = (n^2 - (j/2)^2)(n^2 - (j/2 -1)^2) \cdots$, where the last term is $n$ if $j$ is even and $(n^2 - (1/2)^2)$ if $j$ is odd. Either way, the subleading term (w.r.t. $n$) is 2 orders less than the leading term. 
Then, as before, writing a component of $F_j(x) - F_j^{(n)}$ as $\frac{p_x(n)}{q(n)}$, we have $\lim_{n\to\infty} n^2 \frac{p_x(n)}{q(n)} = a(x) \leq \max_{x\in [0,1]}|a(x)|$. Hence we conclude $\|F_j-F_j^{(n)}\|_{\sup} = \mathcal{O}(1/n^2)$. 


For the second term, note that
\begin{align*}
    K_n &= \left| \pr{\alpha^{(n)}_0}_j f^0 + \pr{\alpha^{(n)}_n}_j f^1 - \frac{\sqrt{2j-1}}{2}\pr{P_{j-1}(-1)f^0 + P_{j-1}(1)f^1} \right| \\
    &\leq M \left| \pr{\alpha^{(n)}_0}_j - \frac{\sqrt{2j-1}}{2} P_{j-1}(-1) \right| 
    + M \left| \pr{\alpha^{(n)}_n}_j - \frac{\sqrt{2j-1}}{2} P_{j-1}(1)\right|.
\end{align*}
It suffices to show that each term in the RHS is $O(1/n)$. Also note
\begin{align*}
    \pr{\sqrt{2j-1}P_{j-1}(-1)}_j &= \pr{(-1)^{j-1} \sqrt{2j-1}}_j 
    = V \text{diag} \pr{1, 0, \dots, 0} V^{-1} e_1 \\
    \pr{\sqrt{2j-1}P_{j-1}(1)}_j
    &= \pr{\sqrt{2j-1}}_j
    = V \text{diag} \pr{1, 2, \dots, N} V^{-1} e_1.
\end{align*}

Recall that for
\begin{align*}
        \tilde{Q}_n &:= \prod_{j=1}^n \pr{I - \frac{1}{2j}A}\\
        \tilde{R}_n &:= \prod_{j=1}^n \pr{I + \frac{1}{2j}A}^{-1}
    \end{align*}
the exact expression for the numerical solution obtained by the bilinear method is
\begin{align*}
    c^n = 
    \tilde{Q}_{n-1} \tilde{R}_{n} \pr{I+A/2} c^1 + 
    \tilde{Q}_{n-1} \tilde{R}_{n} \sum_{l=1}^{n-1} \tilde{Q}_l^{-1} \tilde{R}_l^{-1}  \frac{1}{2}\pr{\frac{1}{l}Bf^l + \frac{1}{l+1}Bf^{l+1}}.
\end{align*}
Now, consider the rightmost endpoint, i.e. the coefficient of $f^n$.
From the expression above, we can immediately find
\begin{align*}
    \pr{\alpha_n^{(n)}}_j
    &= \frac{1}{2} \pr{I+\frac{1}{2n}A}^{-1} B f^n 
    = \frac{1}{2} V \pr{I + \frac{1}{2n}D}^{-1}  V^{-1}e_1 f^n \\
    &= \frac{1}{2} V \text{diag}
    \pr{\frac{1}{1+1/2n}, \frac{2}{1+1/n}, \dots, \frac{N}{1+N/2n}}
    V^{-1}e_1 f^n.
\end{align*}
From this expression, we can directly verify that $\abs{ \pr{\alpha_n^{(n)}}_j -  \frac{\sqrt{2j-1}}{2}P_{j-1}(1)} = \mathcal{O}(1/n)$ for all $j \in \{1,2, \dots, N\}$. 

It remains to check the leftmost endpoint, i.e. coefficient of $f^0$. The terms containing $t=0$ in $c^n$ are 
\begin{align*}
    \tilde{Q}_{n-1} \tilde{R}_{n} \pr{I+A/2} c^1 
    &= \tilde{Q}_{n-1} \tilde{R}_{n} 
    \pr{ c^0 + \frac{h}{2}\pr{I+A}^{-1}Bf'(0)
    + \frac{B}{2}f^1}.
\end{align*}
Notice that due to the extra $h = \mathcal{O}(1/n)$ term,
$f'(0)$ term is negligible. Now considering the remaining term for the coefficient of $f^0$,
\begin{align*}
    \pr{\alpha_0^{(n)}}_j &= 
    n \tilde{Q}_{n-1} \tilde{R}_{n} c^0 
    = nV \pr{I+\frac{1}{2n}D}^{-1}
         \prod_{k=1}^{n-1} 
         \pr{I - \frac{1}{2k}A}\pr{I + \frac{1}{2k}A}^{-1}
    V^{-1} e_1f^0 \\
    &= nV \pr{I+\frac{1}{2n}D}^{-1}
    \pr{ \text{diag}\pr{ \prod_{k=1}^{n-1}\pr{\frac{k-1/2}{k+1/2} }, \prod_{k=1}^{n-1}\pr{\frac{k-1}{k+1} }, \dots, \prod_{k=1}^{n-1} \pr{ \frac{k-N/2}{k+N/2} }}}V^{-1} e_1 f^0 \\
    &= V \pr{I+\frac{1}{2n}D}^{-1}
    \pr{ \text{diag}\pr{ \frac{n/2}{n-1/2}, 0, \frac{(-3/2)(-1/2)(1/2)n}{(n+3/2)(n+1/2)(n-1/2)} \dots,  \prod_{k=1}^{n-1} \pr{ \frac{k-N/2}{k+N/2} }}}V^{-1} e_1 f^0.
\end{align*}
Note that the prefix term $\pr{I + \frac{1}{2n}D}^{-1}$ does not affect the asymptotic rate with respect to $n$. 
Observe that the $j$-th term in the diagonal matrix is $0$ if $j$ is an even number, and $\mathcal{O}(1/n^{j-1}$) if $j$ is an odd number. Hence  $\abs{ \pr{\alpha_0^{(n)}}_j -  \frac{\sqrt{2j-1}}{2}P_{j-1}(-1)} = \mathcal{O}(1/n)$ for all $j \in \{1,2, \dots, N\}$. 
\end{proof}

\begin{remark}
As discussed earlier, the classical technique of bounding the global error of ODEs by adding up the LTEs is not applicable due to the singularity at $t=0$. On the other hand, the quadrature formulation only requires $f$ to have bounded variation, which does not impose strong local conditions on the input function $f$.
\end{remark}

\begin{remark}
    The derived convergence rates are tight in the sense that when certain polynomials are used as the input function, the global error matches the upper bound. By direct computation, one can show that the input function $f(t) = t^2$ yields a global error of $\Theta (1/n)$ for all methods except the bilinear method.
    Similarly, for the bilinear method, input function $f(t) = t^3$ attains a global error of $\Theta(1/n^2)$. 
    Notably, our rate analysis and these matching examples show that the approximate bilinear method is genuinely a first-order method while the bilinear method is a second-order method, when applied to the LegS ODE.
\end{remark}

\begin{figure}[h]
    \centering

    \begin{subfigure}[t]{0.48\textwidth}
        \centering
        \includegraphics[width=\textwidth]{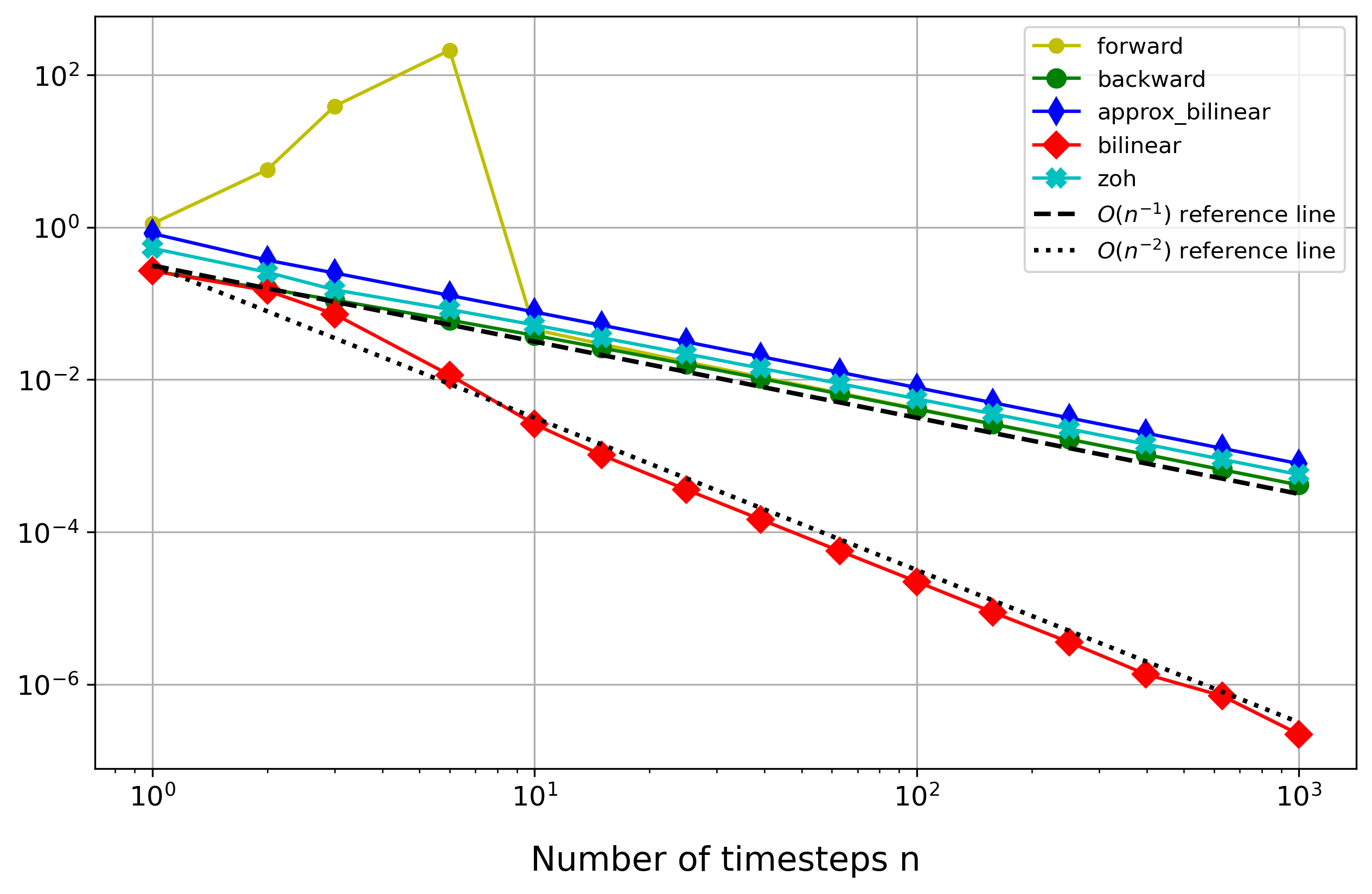}
        \caption{Smooth input function 1}
    \end{subfigure}
    \hfill
    \begin{subfigure}[t]{0.48\textwidth}
        \centering
        \includegraphics[width=\textwidth]{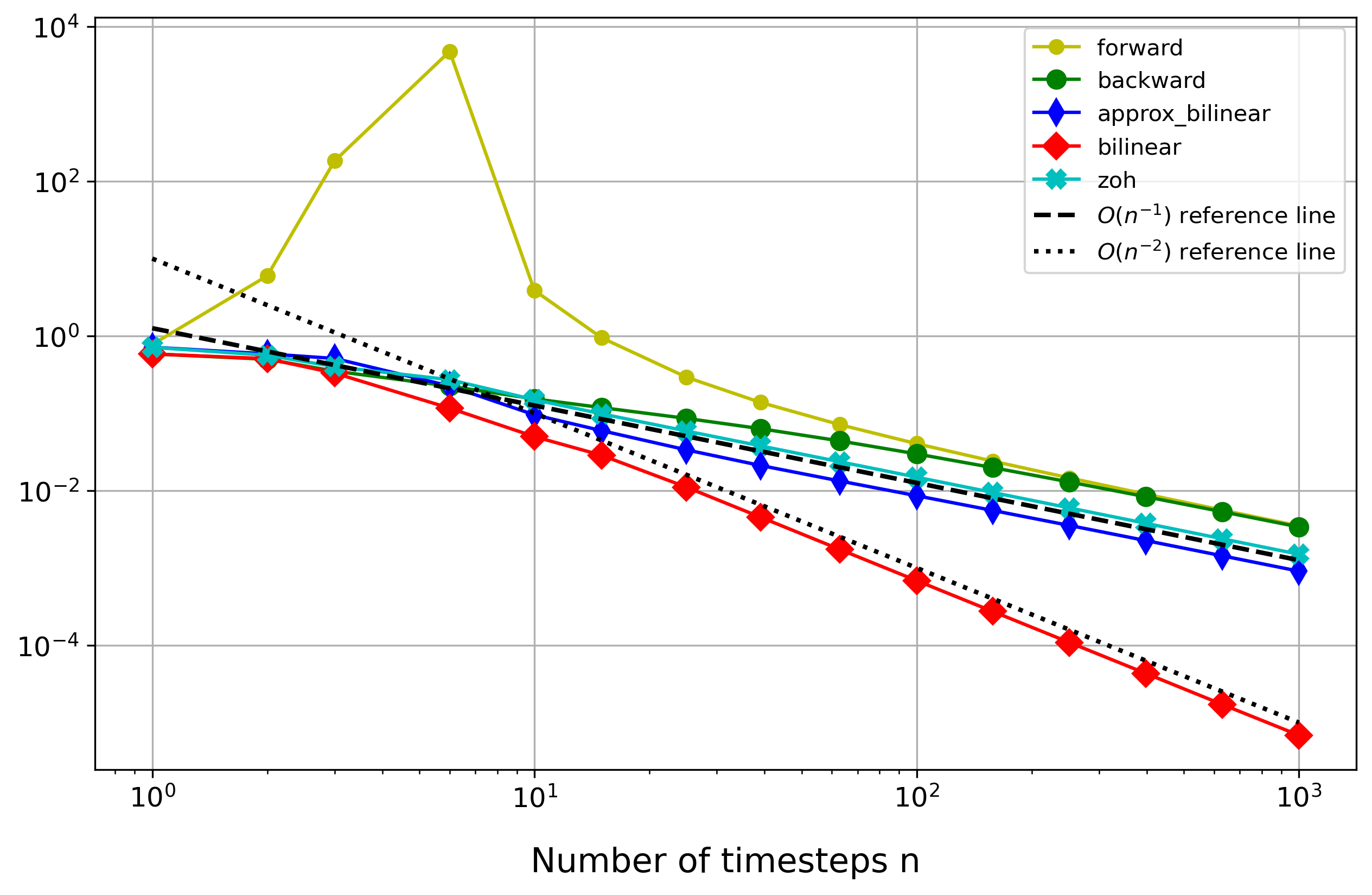}
        \caption{Smooth input function 2}
    \end{subfigure}

    \vspace{0.5cm}

    \begin{subfigure}[t]{0.48\textwidth}
        \centering
        \includegraphics[width=\textwidth]{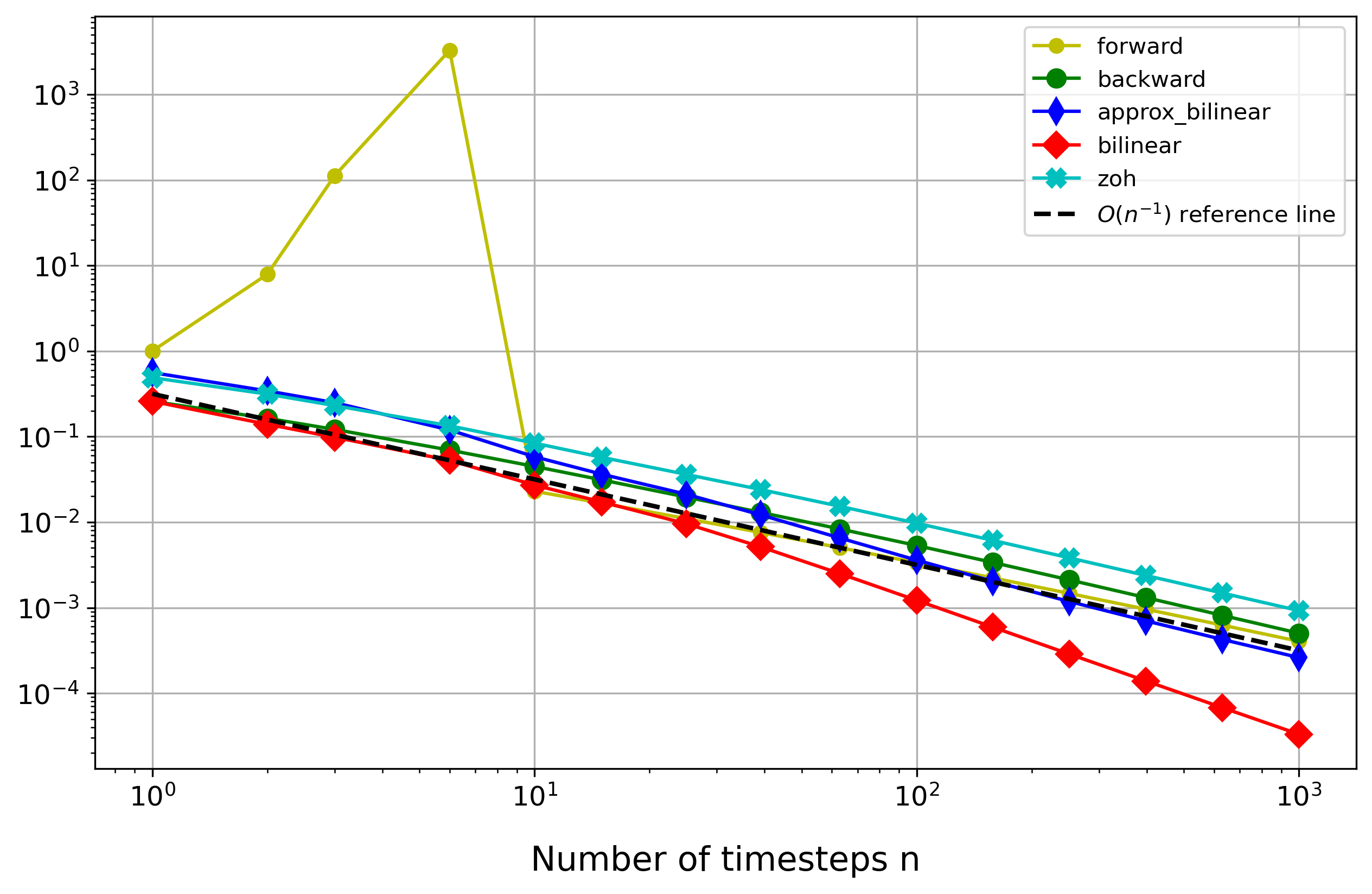}
        \caption{Non-smooth input function 1}
    \end{subfigure}
    \hfill
    \begin{subfigure}[t]{0.48\textwidth}
        \centering
        \includegraphics[width=\textwidth]{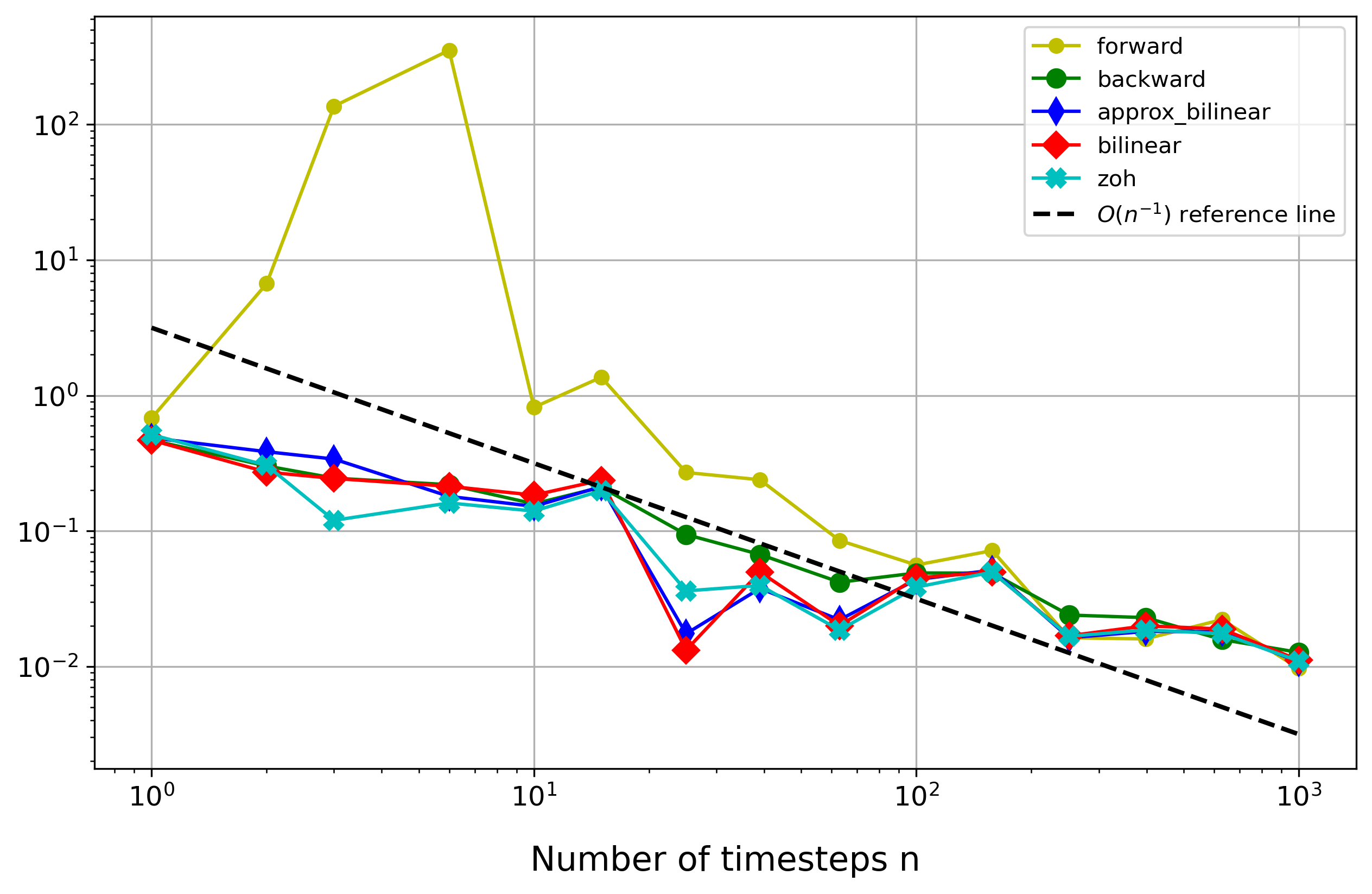}
        \caption{Non-smooth input function 2}
    \end{subfigure}

    \caption{Numerical convergence behavior of the global error of the discretization methods under various regularity properties. The numerical rates agree with theoretical estimates of Theorem~\ref{thm:conv_rate}.
    \textbf{(a)} $f(t) = 2t^3e^{-t} $ (smooth). Bilinear exhibits $\mathcal{O}(1/n^2)$ rate while others exhibit $\mathcal{O}(1/n)$ rate.
    \textbf{(b)} $f(t) = \frac{1}{4}\sin(10t) + \frac{1}{2} \sin(\frac{10t}{3})+ \sin(\frac{10t}{7})$ (smooth). The qualitative behavior is the same as in (a).    
    \textbf{(c)} $f(t) = \sqrt{t}$ (non-smooth, bounded variation). All methods exhibit $\mathcal{O}(1/n)$ rate.
    \textbf{(d)} $f(t) = t^{\frac{1}{20}} \sin(\frac{1}{t})$ (not bounded variation, Riemann integrable). All methods converge in accordance with Theorem~\ref{thm:conv_riemann}, but the rates are slower than $\mathcal{O}(1/n)$.}
    \label{fig:fig1}
\end{figure}

\section{Numerical experiments}
We perform simple numerical experiments to verify the tightness of the convergence theory. The results are shown in Figure~\ref{fig:fig1}. The experiments were carried out with dimension $N = 8$, a time domain $t \in [0, 2]$, and stepsize $h = 2/n$. The y-axis represents the global error $
\|c^n-c(t_n)\|$.
As discussed in the caption of Figure~\ref{fig:fig1}, the numerical behavior precisely matches the theoretical guarantees of Theorems~\ref{thm:conv_riemann} and \ref{thm:conv_rate}.

\section{Conclusion} \label{sec:conclusion}
In this work, we lay the mathematical foundations of the HiPPO-LegS ODE and its discretization methods. First, we established the existence and uniqueness of the solution of the HiPPO-LegS ODE, despite the singularity at $t=0$, and presented this result as Theorem~\ref{thm:exist and unique}. Next, we provided a framework for analyzing various discretization methods applied to the LegS ODE by reinterpreting the discretization methods as numerical quadratures. Using this insight, we proved convergence for all discretization methods of interest for any Riemann integrable input function $f$ and presented the conclusions as Theorem~\ref{thm:conv_rate}. Lastly, we established a $\mathcal{O}(1/n)$-rate under an additional bounded variation assumption on $f$ and presented the results in Theorem~\ref{thm:conv_rate}.

\bibliographystyle{plain}
\bibliography{ref}

\end{document}